\documentclass[reqno,12pt]{amsart}

\usepackage{enumerate}
\usepackage[margin=1in]{geometry}
\usepackage{ifpdf}
\usepackage{amsmath}
\usepackage{amsfonts}
\usepackage{amssymb}
\usepackage{amsthm}
\usepackage{amscd}
\usepackage[hyperfootnotes=false]{hyperref}
\usepackage{setspace}
\usepackage{amsrefs}
\usepackage{nicefrac}
\usepackage{makecell}
\usepackage{mathscinet}


\newcommand{\ignore}[1]{}


\renewcommand{\Re}{\operatorname{Re}}
\renewcommand{\Im}{\operatorname{Im}}
\newcommand{\Orb}{\operatorname{Orb}}

\newcommand{\CRdim}{\operatorname{CRdim}}


\newcommand{\abs}[1]{\left\lvert {#1} \right\rvert}

\newcommand{\C}{{\mathbb{C}}}
\newcommand{\R}{{\mathbb{R}}}


\newcommand{\bC}{{\mathbb{C}}}
\newcommand{\bR}{{\mathbb{R}}}

\newcommand{\sC}{{\mathcal{C}}}
\newcommand{\sF}{{\mathcal{F}}}

\newcommand{\sH}{{\mathcal{H}}}

\newcommand{\sO}{{\mathcal{O}}}

\newcommand{\sV}{{\mathcal{V}}}
\newcommand{\sX}{{\mathcal{X}}}


\newcommand{\rank}{\operatorname{rank}}

\DeclareMathOperator{\mult}{mult}


\newtheorem{thm}{Theorem}[section]

\newtheorem{prop}[thm]{Proposition}

\newtheorem{cor}[thm]{Corollary}

\newtheorem{lemma}[thm]{Lemma}

\theoremstyle{definition}
\newtheorem{defn}[thm]{Definition}
\newtheorem{example}[thm]{Example}

\theoremstyle{remark}
\newtheorem{remark}[thm]{Remark}

\setlength{\parskip}{3pt}

\thanks{The first author was in part supported by NSF grant DMS 0900885. The
fourth author was in part supported by a scholarship from the Vietnam Education Foundation. The fifth author was in part supported by NSF grant DMS 1200652.}
\author{Ji\v{r}\'{\i} Lebl}
\address{Department of Mathematics, University of Wisconsin,
Madison, WI 53706, USA}
\curraddr{Department of Mathematics, Oklahoma State University,
Stillwater, OK 74078, USA}
\email{lebl@math.okstate.edu}
\author{Andr\'e Minor}
\address{Department of Mathematics, University of California
at San Diego, La Jolla, CA 92093-0112, USA}
\email{aminor@math.ucsd.edu}
\author{Ravi Shroff}
\address{Centre for Mathematics and its Applications,
Mathematical Sciences Institute,
Australian National University, Canberra, ACT 0200, Australia}
\email{ravi.shroff@anu.edu.au}
\author{Duong Son}
\address{Department of Mathematics, University of California, Irvine, CA 92697-3875}
\email{snduong@math.uci.edu}
\author{Yuan Zhang}
\address{Department of Mathematical Sciences, Indiana University - Purdue University, Fort Wayne, IN 46805, USA}
\email{zhangyu@ipfw.edu}
\date{December 2, 2013}

\title{CR singular images of generic submanifolds under holomorphic maps}

\begin{document}


\begin{abstract}
The purpose of this paper is to organize some results on the local geometry of
CR singular real-analytic manifolds that are images of CR manifolds via
a CR map that is a diffeomorphism onto its image.
We find a necessary (sufficient in dimension 2)
condition for the diffeomorphism to extend to a finite holomorphic
map. The multiplicity of this map is a biholomorphic invariant
that is precisely the Moser invariant of the image when it is a Bishop surface with vanishing Bishop invariant.
In higher dimensions,
we study Levi-flat CR singular images and we
prove that the set of CR singular points must be large, and in the
case of codimension 2, necessarily Levi-flat or complex.
We also show that there exist real-analytic
CR functions on such images that satisfy
the tangential CR conditions at the singular points, yet fail to extend
to holomorphic functions in a neighborhood.  We provide
many examples to illustrate the phenomena that arise.
\end{abstract}

\maketitle


\section{Introduction} \label{section:intro}

Let $M$ be a smooth real submanifold in $\bC^n$, $n\geq 2$.  Given
$p\in M$, let $ T^{0,1}_pM$ denote the CR tangent space to $M$ at $p$, i.e.,
the subspace of antiholomorphic vectors in $\bC T_p\bC^n$ that are also tangent
to $M$.  $M$ is called a \emph{CR submanifold} when the function $\phi(p)=
\dim_{\bC}T^{0,1}_pM$ is constant.
In this paper, we focus
on submanifolds for which $\phi$ has jump discontinuities. We call such an $M$ a \emph{CR singular submanifold} and call those points where $\phi$ is discontinuous \emph{CR singular points} of $M$.  A CR singular submanifold is necessarily
of real codimension at least 2.
A two-dimensional CR singular submanifold in $\C^2$ already has a rich
structure and
the biholomorphic equivalence problem in this situation has been extensively studied
by many authors, for example \cites{Bishop65, MoserWebster83, Moser85,
HuangKrantz95, Gong04, HuangYin09}.
If $M$ is real-analytic,then a local real-analytic parametrization from $\bR^2$ onto $M$ gives rise to a holomorphic map
from $\bC^2$ to $\bC^2$ whose restriction to $\bR^2$ is a diffeomorphism onto $M$.
This observation motivates us to consider the general situation in which a CR
singular submanifold of $\bC^n$ is a diffeomorphic image of a generic
submanifold of $\bC^n$ of the same codimension via a CR map into $\bC^n$.

To be specific, let $N \subset \C^n$ be a generic real-analytic CR manifold.
Let $f \colon N \to \C^n$
be a real-analytic CR map such that $f$ is a diffeomorphism onto its image $M = f(N)$, which is
CR singular at some point $p\in M$, and suppose that $M$ is generic at
some point.  We call such an $M$ a \emph{CR singular image}.
Since $N$ is real-analytic, $f$
extends to a holomorphic map $F$ from a neighborhood of $N$ in $\C^n$ into a
neighborhood of $M$ in $\C^n$.  If the map $F$ does not have constant rank at
a point, the image of that point is a CR singular point of $M$
(see Lemma~\ref{lemma:singjacob}). This observation was also made in
\cite{ER}, where the images of CR submanifolds under finite
holomorphic maps were studied.
We consider the following questions regarding a CR singular image
 $M$:
\begin{enumerate}[(i)]
\item What can be said about the holomorphic
extension $F$ of the map $f$?  In particular, when is $F$ a finite map?
\item What is the structure of the
set of CR singular points of $M$?
\item Does every real-analytic CR function (appropriately defined) on $M$ extend to a
neighborhood of $M$ in $\bC^n$?
\end{enumerate}

In Section~\ref{section:twodim}, we first consider a real-analytic submafold $M$ of real dimension $n$ in $\bC^n$ with
one-dimensional complex tangent at a point.
Any such $M$ is an image of a totally real $N$
under a CR map that is a diffeomorphism onto $M$.
In special coordinates for $M$ we can write the map $f$
in a rather explicit form. We then provide a necessary and sufficient condition for $f$ to extend to a \emph{finite} holomorphic map $F$,
in terms of the defining equation of $M$.
In this case, the multiplicity of $F$ is a biholomorphic invariant. In two dimensions,
this invariant is closely related to the Moser invariant
(see \cite{Moser85}) of $M$ when it is a Bishop surface with elliptic complex tangent,
or the lowest order
invariants studied in \cite{Harris85} by Harris.
Furthermore, by invoking a theorem of Moser \cite{Moser85}, we can show that
the only Bishop surface in $\C^2$ that cannot
be realized as an image of $\R^2 \subset \C^2$ via a finite holomorphic
map is the surface $M_0:=\{ w = \abs{z}^2 \}$ (Theorem~\ref{thm:n2complete}).
When $\dim_{\bR}M >n$, we are able to provide an explicit example of a CR singular manifold $M$
which is not an image of a generic submanifold
of the same codimension (see Example~\ref{ex:notanimage}).  When $M$ is a singular image, we
give a necessary condition  for the extended map to be finite, and show that the multiplicity
remains a biholomorphic invariant (see Section~\ref{section:prelim}).

We then study a CR singular image $M$ that contains complex subvarieties of positive dimension. The main result 
in Section~\ref{section:discfam} (Theorem~\ref{thm:generalsingthm}) shows
that if $M$ is a CR singular image with a CR singular set $S$, and
$M$ contains a complex subvariety $L$ of complex dimension $j$ that intersects $S$, then
$S \cap L$ is a complex subvariety of (complex) dimension $j$ or $j-1$.  Furthermore, if $M$ contains a continuous family of complex varieties $L_t$ of
dimension $j$ and $L_0 \cap S$ is of dimension $j-1$, then
$L_t \cap S$ is nonempty for all $t$ near 0.

We apply these ideas in
Theorem~\ref{thm:lfsing} to characterize the CR singular set of a Levi-flat
CR singular image. We construct several
examples illustrating the different possibilities for the 
structure
of the set of CR singular points.  A corollary to our theorem
shows that a Levi-flat CR singular image
necessarily has a CR singular set of large dimension, depending on
the generic CR dimension of the singular image.
 When $M$ is of real codimension is 2 in $\bC^n$, we obtain that the CR singular set
is necessarily Levi-flat or complex.  One of the primary motivations for studying CR
singular Levi-flat manifolds is to understand the singularities of
non-smooth Levi-flat varieties in general.  For example, it has been proved by the
first author~\cite{Lebl:lfsing}
that the singular locus of a singular Levi-flat hypersurface is
Levi-flat or complex.  The next step in
understanding the singular set of a Levi-flat hypersurface
would be to find
a Levi-flat stratification, for which we need to understand the
CR singular set that may arise in higher codimension flat submanifolds.

We next attempt to find a convenient set of coordinates for a nowhere
minimal CR singular image $M$ along the lines of the standard result, Theorem~\ref{nicenormcoord},
for CR manifolds.  In particular, a generic submanifold $N$
has coordinates in which some of the equations are of the
form $\Im w' = 0$, where $\{ w' = s \} \cap N$ give the CR orbits of $N$.
The theorem does not generalize directly, but when $M$ is a CR singular image under a finite holomorphic map, we obtain a partial analogue, namely that $M$ is
contained in the intersection of singular Levi-flat hypersurfaces.

In Section~\ref{section:noext}, we consider the extension of real-analytic CR functions defined on a CR singular image $M$.  When $M$ is generic at every point, then all real-analytic
CR functions on $M$ extend to holomorphic functions on a neighborhood of $M$
in $\bC^n$. In contrast, we show that if $M$ is a CR singular image, then there exists
a real-analytic function satisfying all tangential CR conditions, yet \emph{fails} to extend to a holomorphic function on a neighborhood of~$M$. This result is closely related to an earlier result \cite{H} in which the author provided
a necessary and sufficient condition for a CR function on the \emph{generic part}
of $M$ to extend across a CR singular point.

The authors would like to acknowledge Peter Ebenfelt for many
conversations on this and related subjects and for advice and
guidance over the years.


\section{Preliminaries} \label{section:prelim}

In this section we will recall some basic notations and results that will be needed
in the rest of the paper. We refer to \cite{BER:book} for more details.
Let $M$ be a smooth real submanifold of real codimension $d$ in $\bC^n$.
Then at every point $p\in M$, there is a neighborhood $U$ of $p$ and $d$ real-valued
smooth functions $r_1,\dots, r_d$ defined on $U$ such that
\begin{equation}
 M\cap U = \{z\in \bC^n: r_k(z,\bar{z}) = 0, \ k=1,2,\dots, d\},
\end{equation}
where $dr_1\wedge dr_2\wedge \dots \wedge dr_d$ does not vanish on $U$.
For any point $p\in M$, we denote by $T^{0,1}_pM$ the subspace of $T^{0,1}_p\bC^n$ that annihilates
$r_k$ for $k=1,2,\dots, d$. Thus, we see that
\begin{equation}
 \dim_{\bC}T^{0,1}_pM = n- \rank_{\bC}\left(\frac{\partial r_j}{\partial\bar{z}_k}(p,\bar{p})\right)_{j,k}.
\end{equation}
If $\dim_{\bC}T^{0,1}_qM$ is constant for all $q$ near $p$ and equals $n-d$,
then we say that $M$ is \emph{generic} at $p$. In this paper, we shall assume
that $M$ is connected,
real-analytic
and generic at some point and thus the matrix
$\left(\frac{\partial r_j}{\partial\bar{z}_k}\right)_{j,k}$ is of generic full rank.
If further we denote by $S$  the set of CR singular points of $M$, then
\begin{equation}
 S = \left\{z \in M :  \rank_{\bC}\left(\frac{\partial r_j}{\partial\bar{z}_k}\right)_{j,k} \le d-1 \right\}.
\end{equation}
The set of CR singular points $S$ is a \emph{proper} subvariety of $M$ and
$M\setminus S$ is generic at all points.

\begin{remark}
We note that when $M \subset \C^n$
is a CR singular submanifold such that there exists
a subbundle $\sV \subset \C \otimes TM$ such that $\sV_q = T^{0,1}_qM
\subset T^{0,1}_q \C^n$
for all $q \in M \setminus S$, then $(M,\sV)$ becomes an abstract
real-analytic CR manifold.  Hence, $(M,\sV)$ is locally integrable (see
\cite{BER:book}*{Theorem 2.1.11}).  Therefore, for every $p \in M$ we obtain a
generic $N \subset \C^n$ and a real-analytic CR map $f \colon N \to \C^n$
such that $f$ is a diffeomorphism onto an open neighbourhood of $p$ in $M$.
We shall call such pair $(N,f)$ (or simply $N$) a resolution of CR singularity of $M$ near $p$. The converse
is also true; if there exists a resolution of CR singularity of $M$ near $p$, then
the CR bundle on $M\setminus S$ extends to a subbundle of $\bC\otimes_{\bR} TM$ in a neighborhood of $p$ on $M$.
The resolution of CR singularity $N$,
if it exists,
is unique, modulo a biholomorphic equivalence (Proposition~\ref{prop:23}).
\end{remark}

Let $N \subset \C^n$ be a real-analytic
generic submanifold and $f \colon N \to \bC^n$
a real-analytic CR map that is a diffeomorphism onto its image $M=f(N)$.
By the real-analyticity, the map $f$ extends to a holomorphic map
$F$ in a neighborhood of $N$ in $\bC^n$.  One of the questions we are interested
in
is whether $F$ is a finite holomorphic map. For $p\in \bC^n$ we denote by $\sO_p$
the ring of germs of holomorphic functions at $p$.

\begin{defn}
A germ of a holomorphic map $F=(F_1, \dots, F_n)$ defined in a neighborhood of $p\in \C^n$
is said to be finite at $p$ if the ideal $\mathcal{I}(F)$ generated by
$F_1-F_1(p),
\dots, F_n - F_n(p)$ in
$\sO_p$ is of finite codimension, that is, if $\dim_{\bC}\sO_p/\mathcal{I}(F)$ is finite.
This number, denoted by $\mult_p(F)$, is called the multiplicity of $F$ at $p$.
\end{defn}

Equivalently, a holomorphic map 
 as above is finite if and only if the
germ at $p$ of
the complex analytic variety $F^{-1}\bigl(F(p)\bigr)$ is an isolated point
(cf. \cite{AGV85}).
In this case, for any $q$ close enough to $F(p)$, the number of preimages
$\# F^{-1}(q)$ is finite and always less than or equal to $\mult_p(F)$.
The equality holds for generic points in a neighborhood of $F(p)$.

%

\begin{prop}\label{prop:23}
Let $M, \widetilde{M} \subset \C^n$
be connected CR singular real-analytic submanifolds
that are generic at some point and $\varphi$ a biholomorphic map of a
neighbourhood of $M$ to a neighbourhood of $\widetilde{M}$ such that
$\varphi(M) = \widetilde{M}$.
Let $N, \widetilde{N} \subset \C^n$ be generic
real-analytic submanifolds,
$F$ be a holomorphic map from
a neighborhood of $N$ to a neighborhood of $M$, and
$\widetilde{F}$ be a holomorphic map from
a neighborhood of $\widetilde{N}$ to a neighborhood of $\widetilde{M}$,
such that $F|_N$ and
$\widetilde{F}|_{\widetilde{N}}$ and
are diffeomorphisms onto $M$ and $\widetilde{M}$ respectively.
Then $N$ and $\widetilde{N}$ are biholomorphically equivalent.

Furthermore, for any point $p \in M$, $\mult_p(F)$ is a
local biholomorphic invariant of $M$ (i.e., does not
depend on $N$ and $F$).
\end{prop}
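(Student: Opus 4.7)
The plan is to construct a real-analytic CR diffeomorphism $h \colon N \to \widetilde{N}$ from $F$, $\widetilde{F}$, and $\varphi$, then to promote it to a biholomorphism of neighborhoods using the standard extension of CR functions on generic real-analytic submanifolds. Set
\[
h := (\widetilde{F}|_{\widetilde{N}})^{-1} \circ \varphi \circ F|_N,
\]
which is a real-analytic diffeomorphism $N \to \widetilde{N}$. Write $S$ and $\widetilde{S}$ for the CR singular sets of $M$ and $\widetilde{M}$; as $\varphi$ is biholomorphic, $\varphi(S) = \widetilde{S}$. By Lemma~\ref{lemma:singjacob}, $\widetilde{F}$ has full rank on $\widetilde{N} \setminus \widetilde{F}^{-1}(\widetilde{S})$, so for each $q \in N' := N \setminus F^{-1}(S)$ the map $\widetilde{F}$ is locally biholomorphic near $h(q)$. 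Thus near such $q$, $h$ coincides with the restriction to $N$ of the holomorphic composition $\widetilde{F}^{-1} \circ \varphi \circ F$, whence $h|_{N'}$ is CR; by real-analyticity, $h$ is CR on all of $N$.

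Since $N$ is a generic real-analytic submanifold of $\bC^n$, each component of $h$, being a real-analytic CR function, extends holomorphically to a neighborhood of $N$ (cf.\ \cite{BER:book}); applying the same reasoning to $h^{-1}$ produces holomorphic extensions $H$ and $H'$. The identities $H' \circ H = \mathrm{id}$ and $H \circ H' = \mathrm{id}$ hold on $N$ and $\widetilde{N}$ respectively, and hence on full neighborhoods by the identity principle. So $H$ is a biholomorphism carrying $N$ onto $\widetilde{N}$, establishing the first assertion.

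For the invariance of $\mult_p(F)$, apply the first assertion with $\widetilde{M}=M$, $\varphi=\mathrm{id}$, and a second resolution $(\widehat{N},\widehat{F})$ in place of $(\widetilde{N},\widetilde{F})$. The resulting biholomorphism $H$ satisfies $F = \widehat{F} \circ H$ on $N$, hence on a full neighborhood of $N$ by analytic continuation. Since multiplicity of a finite holomorphic map is invariant under biholomorphic change of coordinates in the source, $\mult_q(F) = \mult_{H(q)}(\widehat{F})$ for the unique $q \in N$ with $F(q)=p$. For a general biholomorphism $\varphi$ carrying $M$ onto $\widetilde{M}$, the same construction yields $\varphi \circ F = \widetilde{F} \circ H$ near $N$, so invariance of multiplicity under biholomorphism of the target completes the claim.

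The main technical subtlety is verifying that $h$ is genuinely CR at points whose $F$-image lies in the CR singular locus $S$, where the local holomorphic inverse of $\widetilde{F}$ is not available; this is overcome by the real-analyticity of $h$, which forces the CR condition to propagate from the dense open set $N'$ to all of $N$. Everything else reduces to the standard holomorphic extension of CR functions on generic real-analytic submanifolds and the identity principle.
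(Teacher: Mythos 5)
Your proposal is correct and follows essentially the same route as the paper: both form the composition $\widetilde{f}^{-1}\circ\varphi\circ f$, observe via Lemma~\ref{lemma:singjacob} that it is CR off the nowhere dense set $f^{-1}(S)$ and hence CR everywhere by real-analyticity, extend it to a biholomorphism using the standard extension of CR maps from generic real-analytic submanifolds, and deduce the multiplicity invariance from the commutativity of the extended diagram. Your write-up merely makes the last step (invariance of $\mult$ under biholomorphic changes in source and target) slightly more explicit than the paper's one-line conclusion.
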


\begin{proof}
Write $f = F|_N$ and $\widetilde{f} = \widetilde{F}|_{\widetilde{N}}$.
We have the
following commutative diagram 
\begin{equation}
\begin{CD}
N @>f>> M\\
@V {\widetilde{f}^{-1} \circ \varphi \circ f} VV       @VV {\varphi|_M} V\\
\widetilde{N}     @>\widetilde{f}>> \widetilde{M}.\\
\end{CD}
\end{equation}
Let $S$ be the set of CR singular points of $M$, and hence $\varphi(S)$
the set of CR singular points of $\widetilde{M}$.
As $\widetilde{M}$ is generic outside $\varphi(S)$ and the Jacobian
of $\widetilde{F}$ does not vanish on $\widetilde{M} \setminus \varphi(S)$, then
$\widetilde{f}^{-1}$ is a CR map on $\widetilde{M} \setminus \varphi(S)$.
The map $\widetilde{f}^{-1} \circ \varphi \circ f$ is a diffeomorphism that
is a CR map outside of $f^{-1}(S)$, which is nowhere dense in $N$.  It is, therefore,
a real-analytic CR diffeomorphism of the generic submanifolds $N$ and
$\widetilde{N}$, and so it extends to a biholomorphism of a neighbourhood.
By uniqueness of the extension of CR maps from generic submanifolds,
the diagram still commutes after we extend.  Hence, the extensions $F$
and $\widetilde{F}$ have the same multiplicity.
\end{proof}

If $M$ is a real-analytic submanifold of codimension $d=2$ in $\bC^n$, then $\dim_{\C} T^{0,1}_pM = n-1$ at a CR singular point $p\in M$. Thus we can find a linear change of coordinates such that the new coordinates
$Z=(z,w)\in \C^{n-1} \times \C$ vanishes at $p$ and $M$ is given by one complex
equation:
\begin{equation}
w = \rho(z,\bar{z}) ,
\end{equation}
where $\rho$ vanishes to order at least $2$ at $0$.

\begin{prop}
Suppose $M \subset \C^n$
is a CR singular real-analytic submanifold of codimension 2
near zero, defined by $w=\rho(z,\bar{z})$, where $\rho$ vanishes to
order at least 2 at $0$.  Suppose that $N$ is a real-analytic
generic submanifold of $\C^n$ and  $F$ a holomorphic map of a neighborhood of $N$ into $\C^n$ such that $F|_N\colon N \to \C^n$ is a diffeomorphism onto $M$.
If $\rho(0,\bar{z}) \equiv 0$, then $F$ cannot be a finite map.
\end{prop}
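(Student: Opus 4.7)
The plan is to exhibit a positive-dimensional complex subvariety of $F^{-1}(0)$ near the (unique) point $p\in N$ with $F(p)=0$, which contradicts finiteness at $p$. Writing $F=(F',F_n)$ with $F'=(F_1,\dots,F_{n-1})$, I would introduce the germ at $p$ of the complex analytic set $V:=\{F'=0\}$ in a neighborhood $U$ of $p$. Since $p\in V$ and $V$ is cut out by $n-1$ holomorphic equations in $\bC^n$, Krull's principal ideal theorem guarantees that every irreducible component of $V$ through $p$ has complex dimension at least one. The remaining task is to show $F_n\equiv 0$ on $V$, so that $V\subseteq F^{-1}(0)$ witnesses non-finiteness.

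The main step, showing $F_n\equiv 0$ on $V$, is where the hypothesis $\rho(0,\bar z)\equiv 0$ enters through complexification of $N$. Since $F|_N$ maps $N$ diffeomorphically onto $M$, the identity
\[
F_n(z)=\rho\bigl(F'(z),\overline{F'(z)}\bigr)
\]
holds for $z\in N$. Setting $\bar F'(\zeta):=\overline{F'(\bar\zeta)}$, holomorphic in a neighborhood of $\bar p$, the function
\[
\tilde H(z,\zeta):=F_n(z)-\rho\bigl(F'(z),\bar F'(\zeta)\bigr)
\]
is holomorphic on a neighborhood of $(p,\bar p)\in\bC^n\times\bC^n$ and vanishes on the totally real set $\{(z,\bar z):z\in N\}$. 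This set is a maximally real submanifold of the complexification $\mathcal N\subset\bC^n\times\bC^n$ of $N$ (a complex submanifold of codimension $2$), so $\tilde H$ vanishes on all of $\mathcal N$. Genericity of $N$ ensures that the first projection $\pi_1\colon\mathcal N\to\bC^n$ is surjective onto a neighborhood of $p$, with fibers of complex dimension $n-2$. Thus for each $z\in V$ near $p$ I may choose $\zeta$ with $(z,\zeta)\in\mathcal N$, giving
\[
F_n(z)=\rho\bigl(0,\bar F'(\zeta)\bigr)=0
\]
by the hypothesis $\rho(0,\bar z)\equiv 0$.

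Once $F_n|_V\equiv 0$ is established, $V$ is a positive-dimensional complex subvariety of $F^{-1}(0)$ through $p$, so $F$ cannot be finite at $p$. The principal obstacle is making the complexification step clean: one must verify that the vanishing of $\tilde H$ on the totally real slice $\{(z,\bar z):z\in N\}$ propagates to all of $\mathcal N$, and that $\pi_1$ indeed surjects onto a full neighborhood of $p$. Both are standard consequences of $N$ being real-analytic and generic of codimension $2$ (via the implicit function theorem applied to complexified defining equations), but they should be invoked in a sufficiently small neighborhood where $\mathcal N$ is well-defined as a connected complex manifold of dimension $2n-2$.
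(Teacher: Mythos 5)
Your proof is correct and follows essentially the same route as the paper: both complexify the identity $F_n=\rho\bigl(F',\overline{F'}\bigr)$ on $N$ and use genericity to decouple the antiholomorphic argument, so that $\rho(0,\cdot)\equiv 0$ forces $F_n$ to vanish wherever $F'$ does. The paper implements this via normal coordinates (setting $\bar\zeta=0$) to obtain the ideal membership $F_n\in(F_1,\dots,F_{n-1})$ directly, whereas you evaluate along fibers of the projection from the complexification and then invoke Krull's theorem; the difference is cosmetic.
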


\begin{proof} Write $F=(F', F_n)$.  Let $N$
be given
in normal coordinates by (see \cite{BER:book}*{Proposition 4.2.12})
\begin{equation}
\omega = r(\zeta,\bar{\zeta},\bar{\omega}) ,
\end{equation}
where $(\zeta,\omega) \in \C^{n-2} \times \C^2$, and $r$ is
a $\C^2$-valued holomorphic function satisfying
$r(\zeta,0,\bar{\omega}) =
r(0,\bar{\zeta},\bar{\omega})
=
\bar{\omega}$.
Then for $(\zeta,\omega) \in N$ we have
\begin{equation}
F_n(\zeta,\omega) =
\rho\bigl(F'(\zeta,\omega),\bar{F'}(\bar{\zeta},\bar{\omega})\bigr) .
\end{equation}
By plugging in the defining equation of $N$ as
$\bar{\omega} = \bar{r}(\bar{\zeta},\zeta,\omega)$, we get that
\begin{equation} \label{eq:normcoordplugin}
F_n(\zeta,\omega) =
\rho\bigl(F'(\zeta,\omega),\bar{F'}(\bar{\zeta},\bar{r}(\bar{\zeta},\zeta,\omega))\bigr)
\end{equation}
holds near 0.  In particular \eqref{eq:normcoordplugin}
holds when $\bar{\zeta} = 0$.  Using
the fact that in normal coordinates $\bar{r}(0,\bar{\zeta},\omega) \equiv \omega$
we get
\begin{equation}
F_n(\zeta,\omega) =
\rho\bigl(F'(\zeta,\omega),\bar{F'}(0,\bar{r}(0,\zeta,\omega))\bigr)
=
\rho\bigl(F'(\zeta,\omega),\bar{F'}(0,\omega)\bigr) .
\end{equation}
So $F$ cannot be finite if $\rho(0,\bar{z}) \equiv 0$, as in that
case $F_n$ is in the ideal generated by components of $F'$.
\end{proof}

\begin{remark} \label{remark:complexcont}
We conclude this section by a remark that the existence of a finite
holomorphic map $F$ and a generic submanifold $N$ such that $F$ restricted to
$N$ is a diffeomorphism onto $M=F(N)$ implies that the set of CR singular points on
$M$ is contained in
a proper complex subvariety of $\C^n$.  This fact follows from
Lemma~\ref{lemma:singjacob}, which shows that the inverse image of the CR
singular points is contained in the set where the Jacobian of $F$ vanishes,
and from Remmert proper map theorem.
\end{remark}


\section{Images under finite maps of totally real submanifolds} \label{section:twodim}

Let $M$ be a real-analytic submanifold of real dimension $n$ in $\bC^n$.
Assume that $M$ is totally real at a generic point and
$S\subset M$ is the CR singular set. Suppose that $p\in S$ and $\dim_{\bC} T_p^{0,1}M = 1$. Then, after a change
of coordinates, we can assume
that $p=0$ and $M$ is defined by the following equations
\begin{equation} \label{eq:nmfld-define}
\begin{split}
z_n &= \rho(z_1,\bar z_1, x') ,\\
y_\alpha &= r_\alpha(z_1,\bar z_1, x'),\quad \alpha = 2,\dots, n-1 .
\end{split}
\end{equation}
Here, $z = (z_1,\dots, z_n)$ are the coordinates in $\bC^n$,
$z'=(z_2,\dots,
z_{n-1})$ (when $n=2$, $z'$ is omitted) and $z_j = x_j + i y_j$. The
functions $\rho$ and $r_\alpha$ have no linear terms,
and $r_\alpha$ are real-valued. By making another change of coordinates,
we can eliminate the harmonic terms in $r_\alpha$ to obtain
\begin{equation}\label{sonz}
 r_\alpha(0, \bar{z}, 0)=0.
\end{equation}

The case when $p$ is a ``nondegenerate'' CR singularity of $M$ 
 has been studied from a different view point
(see, e.g., \cites{Bishop65, Huang98, HuangYin12, KenigWebster84}). Here we make no such assumption.

\begin{prop}\label{totallyreal}
Let $M\subset \bC^n$ be a real-analytic submanifold of real dimension $n$, with a complex tangent at $0$,
defined by equations \eqref{eq:nmfld-define}.
Then the following are equivalent
\begin{enumerate}[(i)]
\item \label{thm:nmfld:i} There is a totally real submanifold $N$ of dimension $n$ in $\C^n$ and
a germ of a \emph{finite} holomorphic map $F \colon (\C^n,0) \to (\C^n,0)$ such
that $F|_N$ is diffeomorphic onto $M = F(N)$ (as germs at 0).
\item \label{thm:nmfld:ii} $\rho(0,\bar{z}_1, 0) \not\equiv 0$.
\end{enumerate}
Furthermore, if \eqref{thm:nmfld:ii} holds, then
\begin{equation}\label{eq:nmfld-rho1}
 \rho(0,\bar{z}_1, 0 ) = c\bar{z}_1^k + O(\bar{z}_1^{k+1}), \quad c\neq 0,
\end{equation}
with $k=\mult_0(F)$.
\end{prop}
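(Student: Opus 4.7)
The plan is to prove both directions simultaneously by constructing one explicit candidate $(N,F)$ from the defining equations \eqref{eq:nmfld-define} and reading off finiteness and multiplicity from a quotient-ring computation in $\sO_0$; since Proposition~\ref{prop:23} guarantees that $\mult_0(F)$ is a biholomorphic invariant of $M$, it will suffice to work with this one model. Concretely, I put coordinates $(\sigma_1,\tau_1,\sigma_2,\dots,\sigma_{n-1})$ on the source $\bC^n$, let $N$ be the germ of $\bR^n$ at the origin (totally real of real dimension $n$), and define the holomorphic map
\begin{equation*}
F_1=\sigma_1+i\tau_1,\quad F_\alpha=\sigma_\alpha+ir_\alpha(\sigma_1+i\tau_1,\sigma_1-i\tau_1,\sigma'),\quad F_n=\rho(\sigma_1+i\tau_1,\sigma_1-i\tau_1,\sigma'),
\end{equation*}
where $\sigma'=(\sigma_2,\dots,\sigma_{n-1})$. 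A direct check shows that $F|_{\bR^n}$ is exactly the real parametrization of $M$ coming from \eqref{eq:nmfld-define}, and hence a diffeomorphism onto $M$.

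The next step is to simplify $\sO_0/\mathcal{I}(F)$ by successive elimination. The generator $F_1$ forces $\sigma_1\equiv-i\tau_1$, whence $\sigma_1-i\tau_1\equiv-2i\tau_1$; the generators $F_\alpha$ then read $\sigma_\alpha\equiv-ir_\alpha(0,-2i\tau_1,\sigma')$ for $\alpha=2,\ldots,n-1$. Because $r_\alpha$ has no linear terms, the Jacobian in $\sigma'$ of the system $\{\sigma_\alpha+ir_\alpha(0,-2i\tau_1,\sigma')\}_\alpha$ at the origin is the identity, so the holomorphic implicit function theorem produces a unique analytic solution $\sigma'=g(\tau_1)$. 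The normalization \eqref{sonz} says $r_\alpha(0,-2i\tau_1,0)\equiv0$, so $g\equiv0$ is a solution, and therefore \emph{the} solution. Consequently
\begin{equation*}
\sO_0/(F_1,\ldots,F_{n-1})\cong\bC\{\tau_1\}\quad\text{and}\quad F_n\equiv\rho(0,-2i\tau_1,0)\pmod{F_1,\ldots,F_{n-1}}.
\end{equation*}

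Both implications and the final assertion now follow at once. If \eqref{thm:nmfld:ii} holds with $\rho(0,\bar z_1,0)=c\bar z_1^k+O(\bar z_1^{k+1})$ and $c\ne 0$, then $F_n\equiv c(-2i)^k\tau_1^k+O(\tau_1^{k+1})$, so $\sO_0/\mathcal{I}(F)\cong\bC\{\tau_1\}/(\tau_1^k)$ has complex dimension $k$; hence $F$ is finite with $\mult_0(F)=k$, which gives \eqref{thm:nmfld:i} and the multiplicity assertion. Conversely, if $\rho(0,\bar z_1,0)\equiv 0$ then $F_n$ lies in $(F_1,\ldots,F_{n-1})$, so $\sO_0/\mathcal{I}(F)\cong\bC\{\tau_1\}$ is infinite dimensional (equivalently $F^{-1}(0)$ contains the curve $\{\sigma_1=-i\tau_1,\sigma'=0\}$) and $F$ is not finite. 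Proposition~\ref{prop:23}, applied with $\widetilde M=M$ and $\varphi=\mathrm{id}$, then shows that no totally real resolution $(\widetilde N,\widetilde F)$ of $M$ can yield a finite extension either, which establishes \eqref{thm:nmfld:i}$\Rightarrow$\eqref{thm:nmfld:ii}.

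The one delicate point I anticipate is the elimination $\sigma'\equiv 0\pmod{F_1,\ldots,F_{n-1}}$, which relies essentially on the normalization \eqref{sonz} that kills the harmonic terms of each $r_\alpha$. Without it, each $\sigma_\alpha$ would reduce to a nonzero power series in $\tau_1$ whose substitution into $\rho$ would complicate tracking the leading term of the reduction of $F_n$, and the clean equality $\mult_0(F)=k$ would have to be recovered by a more careful bookkeeping argument.
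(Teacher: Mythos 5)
Your proof is correct, and the model you build is exactly the paper's: both take $N=\R^n$ and the identical parametrizing map assembled from \eqref{eq:nmfld-define}. Where you genuinely diverge is in how finiteness and the multiplicity are extracted. The paper argues geometrically: the same elimination (using \eqref{sonz} to force the primed variables to vanish) shows $F^{-1}(0)=\{0\}$, which gives finiteness, and then $\mult_0(F)=k$ is obtained by counting preimages of a generic nearby point $\tilde z$ --- solving the perturbed system by the implicit function theorem and invoking Rouch\'e's theorem on the resulting one-variable equation. You instead compute the local algebra directly: since $(F_1,\dots,F_{n-1},\tau_1)$ is a holomorphic coordinate system at $0$ (the relevant Jacobian is the identity because the $r_\alpha$ have no linear terms), one has $\sO_0/(F_1,\dots,F_{n-1})\cong\C\{\tau_1\}$, and $F_n$ reduces on that curve to $\rho(0,-2i\tau_1,0)$, a unit times $\tau_1^k$, so $\dim_\C \sO_0/\mathcal{I}(F)=k$ in one stroke. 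This buys a cleaner multiplicity computation --- no perturbation, no Rouch\'e, and no separate argument that the generic fiber consists of $k$ \emph{distinct} points --- at the cost of leaning on the algebraic definition of $\mult_0(F)$ rather than the fiber count. Your explicit appeal to Proposition~\ref{prop:23} in the direction \eqref{thm:nmfld:i}$\Rightarrow$\eqref{thm:nmfld:ii}, to pass from ``this particular extension is not finite'' to ``no totally real resolution admits a finite extension,'' makes precise a step the paper leaves implicit, and your closing remark correctly identifies \eqref{sonz} as the normalization that makes the elimination $\sigma'\equiv 0$ work.
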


\begin{proof}
We can assume that $N = \R^n \subset \C^n$.
Consider the map $f = (f_1, f', f_n)
\colon \R^n \to \C^n$ given by
\begin{align}\label{eq:nmfld-a1}
f_1(t_1, t', t_n) &= t_1+ i t_n,\\ \label{eq:nmfld-a2}
f_\alpha(t_1, t', t_n) &= t_\alpha + i r_\alpha(t_1+ i t_n, t_1- i t_n, t'),\\ \label{eq:nmfld-a3}
f_n(t_1, t', t_n) &= \rho(t_1+ i t_n, t_1- i t_n, t').
\end{align}
Then clearly, $f$ is a diffeomorphism from a neighborhood of $0$ in $\bR^n$
onto $M$. Furthermore, $f$ extends to a holomorphic map $F$ from a neighborhood of $0$
in $\bC^n$.
Thus, by abuse of notation, we also denote the coordinates in $\bC^n$ by $t$.
Let $V_F$ be the germ of $F^{-1}(0)$ at $0$.

\eqref{thm:nmfld:i} $\Rightarrow$ \eqref{thm:nmfld:ii}:
Assume that $\rho(0,\bar{z}, 0) \equiv 0$. By making use of the fact that $r_\alpha(0, \bar{z}, 0) \equiv 0$,
 one can check that
\begin{equation}
\{(t_1,t',t_n) \in \C^n: t_1 = -it_n, t'=0\} \subset V_F.
\end{equation}
Thus $V_F$ has positive dimension, and so $F$ is not finite at $0$.

\eqref{thm:nmfld:ii} $\Rightarrow$ \eqref{thm:nmfld:i}:
Assume that \eqref{thm:nmfld:ii} holds. Let $(t_1, t', t_n) \in V_F$.
From \eqref{eq:nmfld-a1} we have $t_1 = -i\, t_n$.
Substitute into \eqref{eq:nmfld-a2} and \eqref{eq:nmfld-a3} we get
\begin{align}\label{eq:nmfld-a4}
t_\alpha + i\, r_\alpha(0, 2t_1, t')& = 0\\ \label{eq:nmfld-a5}
\rho(0, 2t_1, t') &= 0.
\end{align}
As $r_\alpha$ has no linear terms, by the implicit function theorem,
we see that \eqref{eq:nmfld-a4} has a unique solution. Furthermore, since
$r_\alpha$ has no harmonic terms, the unique
solution must be $t' = 0$.

Substituting $t'=0$ into \eqref{eq:nmfld-a5}, we get
\begin{equation}
\rho\bigl(0, 2t_1, 0\bigr) = 0.
\end{equation}
Using \eqref{eq:nmfld-rho1} we get
\begin{equation}\label{caigiday}
 2^kc\, t_1^ k + O(t^{k+1}) = 0.
\end{equation}
We then deduce that $t_1=t_n =0$. Hence, $V_F = \{0\}$ is isolated and thus $F$ is finite.

Finally, let $\tilde{z}=(\tilde z_1, \tilde z', \tilde z_n')$ be a point close enough to $0$.
We will show that for generic $q$, in a small neighborhood of $0$ there are $k$ solutions
to the equation $F(t) = \tilde{z}$. Indeed, if $F(t) = \tilde{z}$ then
\begin{align}
 t_1+ i t_n &= \tilde z_1,\label{sona}\\
 t_\alpha + i\, r_\alpha(t_1+ i t_n, t_1 -  i t_n, t') &= \tilde z_\alpha,\label{sonb}\\
 \rho(t_1+ i t_n, t_1 -  i t_n, t') &= \tilde{z}_n. \label{sonc}
\end{align}
From \eqref{sona} we have
\begin{equation}\label{sond}
 t_1 = -i\,t_n + \tilde{z}_1.
\end{equation}
Substitute \eqref{sond} into \eqref{sonb} we get
\begin{equation}\label{sone}
 t_\alpha +i\, r_\alpha(\tilde{z}_1, 2t_1-\tilde{z}_1, t') = \tilde{z}_\alpha.
\end{equation}
For $\tilde{z}$ close enough to $0$, the implicit function theorem
applied to \eqref{sone} gives unique solution $t' = \varphi(t_1, \tilde{z}_1)$.
Substitute this into \eqref{sonc} we get
\begin{equation}\label{sonf}
 \rho(\tilde{z}_1, 2t_1-\tilde{z}_1, \varphi(t_1, \tilde{z}_1)) = \tilde{z}_n.
\end{equation}
From \eqref{sonz} we have that $\varphi(t_1,0) \equiv 0$. Thus, \eqref{sonf}
is a small pertubation of \eqref{caigiday} depending on the size of $\abs{\tilde{z}}$.
Thus, there is a neighborhood $U$ of $0$ such that for generic $\tilde{z}$ close
enough to $0$, the equation \eqref{sonf} has exactly $k$ solutions in $U$ for $t_1$,
by Rouch\'{e}'s theorem. Therefore,
$F^{-1}(\tilde{z})$ consists of $k$ distinct points near $0$,
and hence $\mult_0(F) =k$.
\end{proof}

In two dimensions, we
 get further results along this line of reasoning.
Let $M$ be a real-analytic surface (i.e., a $2$-dimensional real-analytic submanifold)
in $\C^2$ and $p\in M$. If $p$ is a CR singular point of $M$, then we can
find a change of
coordinates such that $p=0$ and $M$ is given by $w= \rho(z,\bar{z})$,
where $\rho$ vanishes to order at least $2$ at $0$.
A \emph{Bishop surface} is a surface where $\rho$ vanishes exactly
to order 2 at the origin.

As in Theorem~\ref{totallyreal}, we see that the condition $\rho(0,\bar z)\not\equiv 0$, says precisely when $M$
is the image of $\R^2$ under a finite map.
The following lemma is proved by noting that Segre varieties are
invariant under formal transformations.  We omit the details.

\begin{lemma}\label{son1}
Let $M$ and $M'$ be a CR singular real-analytic surfaces in $\C^2$
near $0$ given by
$w=\rho(z,\bar{z})$ and $w' = \rho'(z',\bar{z}')$ and
$F$ a formal invertible transformation that sends $M$ into $M'$. Then
$\rho(0,z)\equiv 0$ if and only if $\rho'(0,\bar z') \equiv 0$.
\end{lemma}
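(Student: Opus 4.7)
The plan is to identify the condition $\rho(0,\bar z) \equiv 0$ with a formal biholomorphic invariant of $M$, namely, the positive-dimensionality of the Segre variety of $M$ at $0$. The main work is to set up the Segre variety carefully in this CR singular codimension~2 setting and to verify its invariance under formal biholomorphisms.

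Concretely, treating $z,\bar z, w, \bar w$ as independent formal indeterminates $(z,\zeta,w,\omega)$, the complex equation $w = \rho(z,\bar z)$ and its conjugate $\bar w = \bar\rho(\bar z, z)$ (where $\bar\rho$ denotes the series obtained by conjugating the coefficients of $\rho$) produce the formal complexification
\[
M^{\mathbb{C}} = \{(z,w,\zeta,\omega) \in \C^4 : w - \rho(z,\zeta) = 0,\ \omega - \bar\rho(\zeta,z) = 0\}.
\]
I would then define the Segre variety of $M$ at $0$ as the $(z,w)$-slice obtained by setting $(\zeta,\omega) = (0,0)$, namely
\[
Q_0(M) = \{(z,w) : w = \rho(z,0),\ \bar\rho(0,z) = 0\}.
\]
Since $\rho(0,\bar z) \equiv 0 \Longleftrightarrow \bar\rho(0,z) \equiv 0$, the second cutting equation is trivial exactly under our hypothesis; hence $Q_0(M)$ is a 1-dimensional formal complex curve when $\rho(0,\bar z) \equiv 0$, and is the isolated point $\{0\}$ otherwise. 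The same dichotomy applies to $M'$ and $Q_0(M')$.

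Next I would invoke invariance. Let $F = (F_1, F_2)$ be the formal invertible transformation with $F(0)=0$ that sends $M$ into $M'$; its coefficient-conjugate $\bar F$ is also a formal invertible transformation of $(\C^2,0)$, and the map
\[
(z,w,\zeta,\omega) \longmapsto \bigl(F(z,w),\ \bar F(\zeta,\omega)\bigr)
\]
is a formal biholomorphism of $(\C^4, 0)$ that sends $M^{\mathbb{C}}$ into $M'^{\mathbb{C}}$. Since $F(0)=\bar F(0) = 0$, it preserves the fiber at $(\zeta,\omega)=(0,0)$, so $F$ restricts to an injective formal map $Q_0(M) \to Q_0(M')$. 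Applying the same reasoning to $F^{-1}$ gives the reverse inclusion, so $Q_0(M)$ and $Q_0(M')$ are formally isomorphic as complex varieties; in particular their dimensions agree, and the desired equivalence follows from the dichotomy above.

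The step that requires the most care is the invariance claim, specifically verifying that $(F,\bar F)$ indeed maps $M^{\mathbb{C}}$ into $M'^{\mathbb{C}}$ when $F$ sends $M$ into $M'$ formally. This amounts to showing that both complexified defining equations of $M'$ pull back into the complexified ideal of $M$, handling the conjugate equation alongside the original. It is a standard fact for real-analytic or formal hypersurfaces, and extends by bookkeeping to our codimension 2 setting; once in hand, the remainder of the argument is a purely formal dimension count, so no further real analytic input is needed.
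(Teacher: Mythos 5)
Your proposal is correct and follows exactly the route the paper indicates: the paper states that the lemma "is proved by noting that Segre varieties are invariant under formal transformations" and omits the details, and your argument supplies precisely those details — identifying $\rho(0,\bar z)\equiv 0$ with positive-dimensionality of $Q_0$, and transporting that dichotomy via the complexified map $(F,\bar F)$ acting on the slice $\{(\zeta,\omega)=(0,0)\}$.
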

%

We conclude this section by the following theorem that completely
analyzes the situation in $n=2$.

\begin{thm} \label{thm:n2complete}
Let $(M,0)$ be a germ of a CR singular real-analytic surface at $0$ in $\C^2$.
Assume that $M$ is defined near $0$ by $w=\rho(z,\bar z)$, where
$\rho$ vanishes to
order at least 2 at $0$. Then the following are equivalent:
\begin{enumerate}[(i)]
\item \label{prop:n2:i} There is a germ of a totally real, real-analytic
surface $(N,0)$ in $\C^2$ and a \emph{finite}
holomorphic map $F \colon (\C^2,0) \to (\C^2,0)$ such that $F|_N$ is
diffeomorphic onto $M = F(N)$ (as germs at $0$).
\item \label{prop:n2:ii} $\rho(0,\bar z) \not\equiv 0$.
\end{enumerate}

In addition, if $M$ is a Bishop surface then
\eqref{prop:n2:i} and \eqref{prop:n2:ii} are equivalent to the following: $M$ is \emph{not} (locally) biholomorphically
equivalent to $M_0:=\{w = \abs{z}^2\}$. If the Bishop invariant $\gamma \neq 0$ then $\mult_0(F) = 2$. Otherwise,
$s = \mult_0(F)$ is the Moser invariant of $M$.
\end{thm}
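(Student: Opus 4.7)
The equivalence of \eqref{prop:n2:i} and \eqref{prop:n2:ii} is immediate from Proposition~\ref{totallyreal} specialized to $n=2$: in that case the real equations $y_\alpha = r_\alpha$ are vacuous, $M$ is defined only by $w=\rho(z,\bar z)$, and the hypothesis $\rho(0,\bar z_1,0)\not\equiv 0$ becomes exactly $\rho(0,\bar z)\not\equiv 0$. Moreover, Proposition~\ref{totallyreal} also tells us that $\mult_0(F) = k$ equals the order of vanishing of $\rho(0,\bar z)$ at $0$.

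Now assume that $M$ is a Bishop surface. To establish the equivalence of \eqref{prop:n2:ii} with $M$ not being biholomorphic to $M_0 = \{w=|z|^2\}$, I argue in two directions. First, the defining function of $M_0$ satisfies $\rho_0(0,\bar z)\equiv 0$, so if $M$ is biholomorphically equivalent to $M_0$, then Lemma~\ref{son1} (applied to the equivalence) forces $\rho(0,\bar z)\equiv 0$, violating \eqref{prop:n2:ii}. Conversely, assume $\rho(0,\bar z)\equiv 0$. After a linear change of coordinates the quadratic part of $\rho$ takes the standard Bishop form $z\bar z + \gamma(z^2+\bar z^2)$ with $\gamma\ge 0$. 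If $\gamma\neq 0$, then $\rho(0,\bar z)$ contains the nonzero term $\gamma\bar z^2$, a contradiction; hence $\gamma=0$ and $M$ is an elliptic Bishop surface with vanishing Bishop invariant. I then invoke Moser's normal form theorem \cite{Moser85}: such $M$ is biholomorphically equivalent to a surface in a normal form whose non-normalizable content is captured by a single integer invariant $s\in\{3,4,\ldots\}\cup\{\infty\}$, with $s=\infty$ corresponding exactly to $M$ being biholomorphic to $M_0$. Because $\rho(0,\bar z)\equiv 0$ is preserved by any formal transformation (Lemma~\ref{son1}), the property persists in the normal form and kills all its pure anti-holomorphic terms, forcing $s=\infty$; hence $M\sim M_0$.

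For the multiplicity statements, I use that $\mult_0(F)$ is a local biholomorphic invariant (Proposition~\ref{prop:23}), so I may compute it in whichever coordinates are most convenient. In standard Bishop coordinates with $\gamma\neq 0$, $\rho(0,\bar z)=\gamma\bar z^2+O(\bar z^3)$ has vanishing order $2$, so Proposition~\ref{totallyreal} yields $\mult_0(F)=2$. In Moser normal form, with $\gamma=0$ and $M\not\sim M_0$, the leading pure anti-holomorphic term of $\rho$ has degree exactly the Moser invariant $s$, so Proposition~\ref{totallyreal} again yields $\mult_0(F)=s$.

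The main obstacle is the converse direction in the Bishop case: passing from the purely formal assertion $\rho(0,\bar z)\equiv 0$ to the geometric conclusion $M\sim M_0$. The crucial input is Moser's theorem \cite{Moser85}, which supplies both the formal normal form for elliptic $\gamma=0$ Bishop surfaces and the convergence of the normalizing transformation when the invariant $s$ is infinite. Lemma~\ref{son1} is the bridge that translates our hypothesis into the statement $s=\infty$, and Proposition~\ref{totallyreal} then pins down the multiplicity in terms of the order of $\rho(0,\bar z)$ in any adapted coordinates.
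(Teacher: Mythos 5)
Your proposal is correct and follows essentially the same route as the paper's own proof: Proposition~\ref{totallyreal} for the equivalence of \eqref{prop:n2:i} and \eqref{prop:n2:ii}, Lemma~\ref{son1} combined with Moser's normal form and convergence theorem to handle the Bishop case, and reading off $\mult_0(F)$ from the order of vanishing of $\rho(0,\bar z)$ in the normalized coordinates. The only cosmetic difference is that you explicitly cite Proposition~\ref{prop:23} to justify computing the multiplicity in convenient coordinates, which the paper leaves implicit.
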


\begin{proof}
The equivalence of \eqref{prop:n2:i} and \eqref{prop:n2:ii} follows from Theorem~\ref{totallyreal}.

Now suppose that $M$ is a Bishop surface given by $w=\rho(z,\bar z)$.
After a biholomorphic change of coordinates, we can write
\begin{equation}\label{bishop}
\rho(z,\bar{z}) =\abs{z}^2 + \gamma (z^2 + \bar{z}^2) + O(\abs{z}^3),
\end{equation}
where $0\leq\gamma\leq \infty$ is the Bishop invariant \cite{Bishop65}. When
$\gamma =\infty$ the equation \eqref{bishop} is understood as $\rho(z,\bar z)= z^2+\bar{z}^2 + O(\abs{z}^3)$.

Suppose that \eqref{prop:n2:i} or \eqref{prop:n2:ii} holds, then from Lemma~\ref{son1} we see that
$M$ is not equivalent to $M_0:=\{w=\abs{z}^2\}$. Conversely, assume that $M$ is not equivalent to $M_0$. Then either
$\gamma \neq 0$ and hence \eqref{prop:n2:ii} holds, or $\gamma=0$.
In the latter case, by the work of Moser
\cite{Moser85}, after a formal change of coordinates, $M$ can be brought
to a ``pseudo-normal'' form
\begin{equation}\label{moserwebster}
w = \abs{z}^2 + z^s + \bar z^s + \sum_{i+j\geq s+1} a_{ij}z^i \bar{z}^j.
\end{equation}
Here, $0\leq s \leq \infty$ is the Moser invariant.  Assume for contradiction that \eqref{prop:n2:ii}
does not hold. Then $\rho(0,\bar{z}) \equiv 0$. Notice that the property
$\rho(0,\bar{z}) \equiv 0$ is preserved under formal transformations carried
in \cite{Moser85}, by Lemma~\ref{son1}, we deduce that $s=\infty$ and so $M$ is formally
equivalent to $M_0$. By \cite{Moser85}, $M$ is biholomorphically equivalent
to $M_0$. We obtain a contradiction. 

Finally, if $\gamma \neq 0$ then from \eqref{bishop} we see that $\rho(0,\bar{z}) = \gamma \bar{z}^2 + O(\bar{z}^3)$ and
hence $\mult_0(F) = 2$. Otherwise, from \eqref{moserwebster}, we have $\rho(0,\bar z) = \bar{z}^s + O(\bar{z}^{s+1})$ and thus
$\mult_0(F) = s$ is the Moser invariant of $M$. 
\end{proof}


\section{Images containing a family of discs} \label{section:discfam}

The following theorem is one of the main tools to study
CR singular submanifolds containing complex subvarieties developed in this paper.

\begin{thm} \label{thm:generalsingthm}
Let $N \subset \C^n$ be a connected generic real-analytic
submanifold and let $f \colon N \to \C^n$ be a real-analytic CR map
that is a diffeomorphism onto its image, $M = f(N)$.
Let $S \subset M$ be the CR singular set of $M$ and suppose that
$M$ is generic at some point.
\begin{enumerate}[(i)]
\item \label{generalsingthm:scaplitem}
If $L \subset M$ is a complex subvariety of dimension $j$,
then $S \cap L$ is either empty or a complex
subvariety of $L$ of dimension $j-1$ or $j$.
\item \label{generalsingthm:generalscapl}
Let $A \colon [0,\epsilon) \times \overline{\Delta} \to M$ be
a family of analytic discs such that $A(0,0) = p \in S$.
If $A(0,\overline{\Delta}) \setminus S$ is nonempty, then
there exists an $\epsilon' > 0$ such that
$A(t,\overline{\Delta}) \cap S \not= \emptyset$ for all $0 \leq t < \epsilon'$.
\item \label{generalsingthm:jdimitem}
If $M$ contains a continuous one real dimensional family of complex
manifolds of complex dimension $j$, and if $S$ intersects one of the
manifolds, then $S$ must
be of real dimension at least $2j-1$.
\end{enumerate}
\end{thm}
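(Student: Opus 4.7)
The plan is to use the holomorphic extension $F\colon U \to \C^n$ of $f$ (defined on a neighbourhood $U$ of $N$) together with Lemma~\ref{lemma:singjacob}: the complex analytic set $J := \{z \in U : \det DF(z) = 0\}$ satisfies $f^{-1}(S) = J \cap N$, so $S = f(J \cap N) = F(J \cap N)$. Everything in what follows is an application of this identification combined with complex-analytic geometry.

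For part~(i), I would pull the complex subvariety $L \subset M$ back to $\widetilde L := F^{-1}(L)$, a complex analytic subvariety of $U$ satisfying $\widetilde L \cap N = f^{-1}(L)$. Then
\begin{equation*}
L \cap S = f\bigl(\widetilde L \cap J \cap N\bigr) = F\bigl(\widetilde L \cap J \cap N\bigr),
\end{equation*}
and the key object is the complex analytic set $V := \widetilde L \cap J$. On each irreducible component of $\widetilde L$ either the whole component lies in $J$ (so its image lies in $S$, producing a component of $L \cap S$ of the full dimension~$j$), or $V$ is cut out inside the component by the single nontrivial holomorphic equation $\det DF = 0$ and hence has codimension~$1$. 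After restricting to a component of $\widetilde L$ on which $F$ is locally a branched cover of a component of $L$, Remmert's proper mapping theorem shows $F(V)$ is a complex subvariety of $L$ of codimension~$1$. The main technical obstacle is handling irreducible components of $\widetilde L$ whose dimension exceeds $j$ (which can happen when $F$ is not finite); one reduces to the branched-cover case by passing to components and shrinking neighbourhoods.

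For part~(ii), my strategy is to lift the family of discs to $N$ and then apply Rouch\'e's theorem. Part~(i) applied to the complex curve $A(0,\overline\Delta)$ shows that $A(0,\cdot)^{-1}(S)$ is a discrete subset of $\overline\Delta$ containing~$0$ (using $A(0,\overline\Delta) \not\subset S$). Set $\widetilde A(t,\zeta) := f^{-1}(A(t,\zeta))$. The map $\widetilde A(0,\cdot)$ is holomorphic on all of $\overline\Delta$: off the exceptional discrete set it equals the composition of the holomorphic $A(0,\cdot)$ with the holomorphic extension of $f^{-1}$ across $M\setminus S$, and it extends across the exceptional set by Riemann's removability theorem. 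The function $g(t,\zeta) := \det DF\bigl(\widetilde A(t,\zeta)\bigr)$ is then continuous in $t$ and holomorphic in $\zeta$, and $g(0,\cdot)$ has an isolated zero at $\zeta = 0$ (since $\widetilde A(0,\cdot)$ is not contained in $J$). By Rouch\'e's theorem there exists $\epsilon' > 0$ such that $g(t,\cdot)$ has a zero in a small disc about~$0$ for every $t \in [0,\epsilon')$, yielding a point of $\widetilde A(t,\overline\Delta) \cap J$ and hence of $A(t,\overline\Delta) \cap S$. (If some $A(t,\cdot)$ happens to be entirely contained in $S$, the conclusion is trivial.)

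For part~(iii), I would combine the preceding two parts. If $L_0 \subset S$ then $\dim_\R S \geq 2j$ and we are done. Otherwise part~(i) forces $L_0 \cap S$ to be a complex subvariety of $L_0$ of dimension exactly $j-1$. Choose a smooth point $p_0 \in L_0 \cap S$ and a small analytic disc $\Delta_0 \subset L_0$ through $p_0$ transverse to $L_0 \cap S$, so that $\Delta_0 \not\subset S$. Using the continuous family $\{L_t\}$ in local coordinates that trivialize the family near $p_0$, extend $\Delta_0$ to a continuous family of analytic discs $\Delta_t \subset L_t$. Part~(ii) applied to this family gives $\Delta_t \cap S \neq \emptyset$, hence $L_t \cap S \neq \emptyset$, for all small $t$. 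Part~(i) applied to each $L_t$ yields $\dim_\C(L_t \cap S) \geq j-1$, i.e.\ real dimension at least $2j-2$. Since the $L_t$ are pairwise distinct across a real $1$-parameter interval, the union $\bigcup_t (L_t \cap S)$ is contained in $S$ and has real dimension at least $2(j-1)+1 = 2j-1$, as claimed.
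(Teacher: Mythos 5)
Your part (i) has a genuine gap, and it sits exactly at the central difficulty of the statement. You correctly record that $L\cap S = F(\widetilde L\cap J\cap N)$, but then silently replace this set by $F(\widetilde L\cap J)$: everything you say about $V=\widetilde L\cap J$ (codimension one in $\widetilde L$, Remmert image a complex subvariety of $L$) concerns the full holomorphic preimage, whereas the set you must analyze is its intersection with the generic slice $N$. These are very different: $S$ equals $F(J\cap N)$, not $F(J)\cap M$, and a point $q=F(x)\in L$ with $x\in\widetilde L\cap J$ need not lie in $S$, because $x$ need not be the point $f^{-1}(q)\in N$ (for the same reason, a component of $\widetilde L$ lying in $J$ does not have image inside $S$). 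At best your argument shows that $L\cap S$ is \emph{contained in} a complex subvariety of $L$ of dimension $j-1$; containment in such a variety does not make the real-analytic set $L\cap S$ itself a complex variety of that dimension. This is not a removable technicality: $S$ itself can be totally real (see the examples in Section 7), so the complexity of $L\cap S$ is genuinely something to be proved, not read off from a push-forward of a complex set.

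The paper's route, which your outline is missing, is to work with the real-analytic subvariety $f^{-1}(L)=\widetilde L\cap N$ of $N$: off $f^{-1}(S)$ the map $F$ is a local biholomorphism (Lemma~\ref{lemma:singjacob}), so $f^{-1}(L)$ is complex near a dense open subset of itself, and the Diederich--Forn\ae ss lemma (via Corollary~\ref{cor:localcplxisglobalcplx}) upgrades this to: $f^{-1}(L)$ is a complex subvariety. Intersecting with the hypervariety $\{J_F=0\}$ makes $f^{-1}(L\cap S)$ complex of dimension $j-1$ (or $j$, or empty), and one then transports complexity forward using that $f$ is a CR diffeomorphism, so $f_*$ carries complex tangents to complex tangents, together with Corollary~\ref{cor:localcplxisglobalcplx} once more. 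By contrast, your part (ii) is correct and is a genuinely different, more elementary route than the paper's (which uses the Kontinuit\"atssatz together with a non-extendable CR function at $p$ from Lemma~\ref{lemma:nonextendible}); your Hurwitz/Rouch\'e argument on $g(t,\zeta)=J_F(\widetilde A(t,\zeta))$ works once you note that for the only $t$ that matter, namely those with $A(t,\overline\Delta)\cap S=\emptyset$, the lift is holomorphic on all of $\Delta$. Your part (iii) is at the level of detail the paper itself gives. Both, however, invoke the conclusion of (i), so the gap there must be repaired first.
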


By a one-dimensional family of
analytic discs we mean a map
\begin{equation}
A \colon I \times \overline{\Delta} \to \C^n ,
\end{equation}
where $I \subset \R$ is an interval, $\Delta \subset \C$ is the unit disc,
$A$ is a continuous function such
that $z \mapsto A(t,z)$ is nonconstant and holomorphic in $\Delta$ for every
$t \in I$.

In essence, part \eqref{generalsingthm:scaplitem} says that if $M$ contains
a complex variety, the intersection of this variety with $S$ must be large
(and complex).
Part \eqref{generalsingthm:generalscapl} of the theorem says
that if there exists a one-dimensional family of complex varieties in $M$,
and $S$ intersects one of them properly, then it must intersect all of them.
Part \eqref{generalsingthm:jdimitem} puts the two parts together.

In the proof of Theorem~\ref{thm:generalsingthm}, we need the following
lemma.

\begin{lemma} \label{lemma:singjacob}
Let $N \subset \C^n$ be a
connected generic submanifold and
let $F \colon \C^n \to \C^n$ be a holomorphic map
such that $f = F|_N$ is
a diffeomorphism onto its image, $M = f(N)$.
If $d$ is the real codimension of $N$, then for $p \in N$ we have
\begin{equation}
\dim_{\C} T_p^{1,0} M
= 2n-d-\rank_{\C} \left[ \frac{\partial F_i}{\partial z_j} (p) \right] .
\end{equation}
In particular,
let $S \subset M$ be the CR singular set of $M$, and
suppose that $M$ is a generic submanifold at some point.  Then
\begin{equation}\label{mozart}
f^{-1}(S) = \{ z \in \C^n : J_F (z) = 0 \} \cap N .
\end{equation}
\end{lemma}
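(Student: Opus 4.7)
The plan is to push tangent spaces forward by the complexified differential $dF_p$ and to use genericity of $N$ to control the $(0,1)$-projection. Fix $p \in N$, set $q = f(p)$, and write $r = \rank_{\C}[\partial F_i/\partial z_j](p)$. Since $f$ is a diffeomorphism, $df_p \colon T_p N \to T_q M$ is a real isomorphism, so its complexification $dF_p \colon \C \otimes T_p N \to \C \otimes T_q M$ is a complex linear isomorphism of spaces of complex dimension $2n - d$. Because $F$ is holomorphic, $dF_p$ preserves the decomposition $\C \otimes T_p \C^n = T_p^{1,0} \C^n \oplus T_p^{0,1} \C^n$; its restriction to $T_p^{1,0} \C^n$ is represented by $[\partial F_i/\partial z_j](p)$ and has rank $r$, and its restriction to $T_p^{0,1} \C^n$ is the complex conjugate, of rank $r$ with kernel of complex dimension $n - r$.

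Next I would identify $T_q^{1,0} M = T_q^{1,0} \C^n \cap (\C \otimes T_q M)$ via this isomorphism. Decomposing $u \in \C \otimes T_p N$ as $u = u^{1,0} + u^{0,1}$ in $T_p^{1,0}\C^n \oplus T_p^{0,1}\C^n$, the image $dF_p(u)$ lies in $T_q^{1,0} \C^n$ precisely when $dF_p(u^{0,1}) = 0$. Hence $(dF_p)^{-1}(T_q^{1,0} M)$ is the preimage under the projection $\pi^{0,1} \colon \C \otimes T_p N \to T_p^{0,1} \C^n$ of $\ker(dF_p|_{T_p^{0,1}\C^n})$. The crucial use of genericity of $N$ at $p$ enters here: it is equivalent to $\pi^{0,1}|_{\C \otimes T_p N}$ being surjective, with kernel $T_p^{1,0} N$ of complex dimension $n - d$. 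A dimension count then gives
\begin{equation*}
\dim_{\C} T_q^{1,0} M = (n - d) + (n - r) = 2n - d - r ,
\end{equation*}
which is the first formula.

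For the second statement, since $M$ is generic at some point and has real codimension $d$, the minimum (and generic) value of $\dim_{\C} T^{1,0} M$ is $n - d$, and the CR singular set $S$ is where this value is strictly exceeded. By the formula just proven, $\dim_{\C} T_{f(p)}^{1,0} M > n - d$ is equivalent to $r < n$, i.e.\ to $J_F(p) = 0$. Since $f$ is a bijection onto $M$, this translates into $f^{-1}(S) = \{z \in \C^n : J_F(z) = 0\} \cap N$. The only delicate step is verifying that genericity of $N$ implies surjectivity of $\pi^{0,1}$ on $\C \otimes T_p N$ with the correct kernel; once that is in place, the rest of the argument is linear algebra on the complexified tangent spaces.
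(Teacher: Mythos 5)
Your proof is correct, and it takes a route dual to the paper's. You complexify: $dF_p$ restricted to $\C\otimes T_pN$ is a complex-linear isomorphism onto $\C\otimes T_qM$ that respects the $(1,0)\oplus(0,1)$ splitting of the ambient tangent space, and you compute $\dim_\C T_q^{1,0}M$ as the dimension of the preimage of $\ker\bigl(dF_p|_{T_p^{0,1}\C^n}\bigr)$ under the projection $\pi^{0,1}|_{\C\otimes T_pN}$, whose surjectivity and $(n-d)$-dimensional kernel $T_p^{1,0}N$ are exactly genericity of $N$. The paper instead stays with real tangent spaces: it writes $T_p\C^n = T_pN + JT_pN$ (genericity again), pushes forward to get $F_*(T_p\C^n) = T_qM + JT_qM$, and combines the inclusion--exclusion identity $\dim(A+B)=\dim A+\dim B-\dim(A\cap B)$ with $\rank_\R F_*|_p = 2\rank_\C\left[\partial F_i/\partial z_j\right](p)$ to solve for $\dim_\R(T_qM\cap JT_qM) = 2\dim_\C T_q^{1,0}M$. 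The two computations are equivalent linear algebra; yours makes more explicit where genericity enters and where the rank of the antiholomorphic part of the differential is used, while the paper's avoids complexified tangent spaces altogether. The deduction of $f^{-1}(S)=\{z : J_F(z)=0\}\cap N$ from the rank formula is the same in both.
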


Here $J_F(z) = \det \left[ \frac{\partial F_i}{\partial z_j} (z) \right]$
denotes the holomorphic Jacobian of $F$.

\begin{proof}
Let $p\in N$ and $q=F(p) \in M$. We denote by $J$ the complex structure in $\C^n$ as usual. Since $N$ is generic,
\begin{equation}
T_pN + JT_pN = T_p\C^n.
\end{equation}
On the other hand, since $F$ is holomorphic, we have $F_*\circ J = J'\circ
F_*$.  Furthermore, $F_*(T_pN) = T_qM$ because $F|_N$ is diffeomorphism.
Therefore,
\begin{equation}
F_*(T_p\C^n) = F_*(T_pN + JT_pN) = F_*(T_pN) + F_*(JT_pN) = T_qM + JT_qM.
\end{equation}
Consequently,
\begin{equation}
\begin{split}
\rank_\R F_*|_p
&= \dim_\R F_*(T_p\C^n) \\
&= \dim_\R (T_qM + JT_qM) \\
&= \dim_\R T_qM + \dim_\R JT_qM - \dim_\R (T_qM \cap JT_qM).
\end{split}
\end{equation}
Since $F$ is holomorphic, the real rank $\rank_\R F_*|_p $ equals to
$2\rank_\C \left[ \frac{\partial F_i}{\partial z_j} (p) \right]$
(twice the rank of the complex Jacobian matrix of $F$). Hence
\begin{equation}
2\rank_\C \left[ \frac{\partial F_i}{\partial z_j} (p) \right] = 2(2n-d) - 2\dim_\C T_q^{1,0}M.
\end{equation}
In other words,
\begin{equation}
\dim_\C T_q^{0,1}M =
2n-d-\rank_\C \left[ \frac{\partial F_i}{\partial z_j} (p) \right].
\end{equation}

The second part of the lemma now follows at once.
\end{proof}

\begin{lemma}\label{lemma:nonextendible}
Let $N \subset \C^n$ be a connected generic real-analytic submanifold and
let $f \colon N \to \C^n$ be a CR map
that is a real-analytic diffeomorphism onto its image, $M = f(N)$. Let $S$ be the
CR singular set of $M$ and suppose that $M$ is generic at some point.
If $p\in S$,
then there is a neighborhood $U$ of $p$ in $M$ and a real-analytic function $u$ on $U$
that is CR on $U\setminus S$
that does not extend to a holomorphic function past~$p$.
\end{lemma}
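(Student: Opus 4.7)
The plan is to build $u$ by transporting holomorphic germs from $N$ to $M$ via the real-analytic inverse $f^{-1}$, and then reduce the nonextendibility of $u$ to the non-surjectivity of the local-ring pullback induced by the extended map $F$. As in the preceding lemmas, $f$ extends to a holomorphic map $F$ on a neighborhood of $N$ in $\C^n$. Fix $p_0 := f^{-1}(p)$; since $p_0 \in f^{-1}(S)$, Lemma~\ref{lemma:singjacob} gives $J_F(p_0)=0$.

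For any holomorphic germ $g\in \sO_{p_0}$, I will define $u := g\circ f^{-1}$ on a small neighborhood $U$ of $p$ in $M$. This $u$ is automatically real-analytic because $f^{-1}\colon M \to N$ is real-analytic. At any point of $U\setminus S$, Lemma~\ref{lemma:singjacob} says $J_F\neq 0$ at the preimage in $N$, so $F$ is a local biholomorphism there and $f^{-1}$ is the restriction to $M$ of a local holomorphic inverse $F^{-1}$; hence $u = g\circ F^{-1}$ is the restriction to $M$ of a holomorphic function, and in particular CR. If on top of that $u$ extended to a holomorphic function $\tilde u$ on a full neighborhood of $p$ in $\C^n$, then $\tilde u\circ F$ would be holomorphic near $p_0$ and would agree with $u\circ f = g$ on $N$; since $N$ is generic real-analytic, the uniqueness of holomorphic extension from $N$ forces $\tilde u\circ F = g$ as germs at $p_0$. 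So $u$ extends past $p$ if and only if $g$ lies in the image of the pullback $F^*\colon \sO_p \to \sO_{p_0}$, $h\mapsto h\circ F$, and the lemma reduces to exhibiting some $g\in \sO_{p_0}\setminus F^*(\sO_p)$.

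The crux, and essentially the only real obstacle, is showing that $F^*$ is not surjective. If $F^*$ were surjective, I would choose $h_i\in\sO_p$ with $h_i\circ F = z_i - (p_0)_i$ for each coordinate $z_i$ centered at $p_0$; the germ $h=(h_1,\dots,h_n)$ would then be a holomorphic left inverse of $F$ at $p_0$, and the chain rule applied to $h\circ F = \id$ at $p_0$ would give $Dh(p)\cdot DF(p_0) = I$, forcing $J_F(p_0)\neq 0$ and contradicting Lemma~\ref{lemma:singjacob}. Picking any $g\in\sO_{p_0}\setminus F^*(\sO_p)$ then yields a real-analytic $u$ on $U$ that is CR on $U\setminus S$ and fails to extend holomorphically past $p$, completing the plan.
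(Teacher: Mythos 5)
Your proposal is correct and follows essentially the same route as the paper: both construct the candidate functions as $g|_N\circ f^{-1}$, reduce nonextendibility to the non-surjectivity of the pullback $F^*\colon \sO_p\to\sO_{p_0}$, and refute surjectivity by producing a holomorphic left inverse of $F$ that would contradict the vanishing of $J_F$ at $p_0$ guaranteed by Lemma~\ref{lemma:singjacob}. The only cosmetic difference is that you phrase the reduction constructively (pick $g$ outside the image of $F^*$) while the paper argues by contradiction assuming every such $u$ extends.
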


\begin{proof}
Let $q=f^{-1}(p) \in N$ and let $F$ be the unique holomorphic extension of $f$ to some neighborhood
of $N$ in $\C^n$. Then by Lemma~\ref{lemma:singjacob}, $F$ has degenerate rank
at $q$. Assume for contradiction that for all neighborhoods $U$ of $p$, every real-analytic function $g$ on
$U$ that is CR on $U\setminus S$ extends to a holomorphic function past $p$. We claim that the homomorphism
\begin{equation}
F^*\colon \mathcal{O}_p \to \mathcal{O}_q, \quad  F^*(g) = g\circ F
\end{equation}
is surjective, where $\mathcal{O}_p$ and $\mathcal{O}_q$ are the rings of
germs holomorphic functions at $p$ and $q$. For if $h$ is a holomorphic function
in a neighborhood $V$ of
$q$ in $\C^n_t$, we consider the function $u = \left(h|_N\right) \circ f^{-1}$. Clearly, $u$ is a real-analytic function
on $U$ that is CR on $U \setminus S$, where $U=f(V\cap N)$. By assumption
$u$ extends past $p$
to an element $\hat{u} \in \mathcal{O}_p$.  It is straightforward to verify that
$F^*(\hat{u}) = h$ on $M\setminus S$ near $p$. By the genericity of $M$ at points in $M\setminus S$, we obtain that $F^*(\hat{u}) = h$ as germs near $p$ and hence, the claim follows.
In particular, there are germs of holomorphic functions $g_j$ at $p$, such that the coordinate functions
$t_j = g_j\circ F$. Let $G=(g_1,\dots, g_j)$, then $G$ is a germ of a
holomorphic map
satisfying $G \circ F = \mathrm{Id}$. This is impossible since $F$ has degenerate
rank at $q$. 
\end{proof}

We also need the following result of
Diederich and Forn\ae ss~\cite{DF:realbnd}*{claim in section 6}.

\begin{lemma}[Diederich-Forn\ae ss] \label{DF:lemma}
Let $U \subset \C^n$ be an open set and
let $S \subset U$ be a real-analytic subvariety.  For every
$p \in S$, there exists a neighborhood $U'$ of $p$ such that
for every $q \in U'$ and every germ of a complex variety
$(V,q) \subset (S,q)$, there exists a (closed) complex subvariety $W \subset
U'$ such that $(V,q) \subset (W,q)$ and such that $W \subset S \cap U'$.
\end{lemma}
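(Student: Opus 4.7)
The plan is to use the Segre variety machinery, combined with an inductive reduction on the complex dimension of the smallest complex subvariety of $U$ that contains $V$.

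First, I would shrink $U$ so that $S$ is the common zero locus of finitely many real-analytic functions $r_1,\dots,r_N$; each $r_j$ complexifies to a holomorphic function $\hat r_j(z,w)$ on $U\times U^*$ (where $U^* := \{w : \bar w \in U\}$) satisfying $\hat r_j(z,\bar z) = r_j(z)$. For each $w \in U^*$, the Segre variety
\[
Q_w := \{z\in U : \hat r_j(z,w)=0,\ j=1,\dots,N\}
\]
is a closed complex analytic subvariety of $U$. I would then choose $U' \Subset U$ on which the constructions below work uniformly. After passing to an irreducible component, we may assume $V$ is irreducible as a germ at $q$.

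The second step is a Segre containment proved via a complexification trick. Parametrize $V$ near $q$ by a holomorphic map $\phi \colon (\Delta^k,0) \to (V,q)$ (using the normalization of $V$ when $V$ is singular) and set $\psi(s) := \overline{\phi(\bar s)}$. The function $F_j(t,s) := \hat r_j(\phi(t), \psi(s))$ is holomorphic in $(t,s)$ and vanishes on the totally real locus $\{s=\bar t\}$ because $F_j(t,\bar t) = r_j(\phi(t)) \equiv 0$; hence $F_j \equiv 0$. In particular $V \subset Q_{\bar q'}$ as germs at $q$ for every $q' \in V$ near $q$.

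The third and hardest step is to extract a closed complex subvariety $W \subset U'$ with $V \subset W \subset S \cap U'$, since Segre containment alone gives $V \subset Q_{\bar q}$ but not $Q_{\bar q} \subset S$. I would argue by induction on the complex dimension of $W_V := \bigcap_{q' \in V} Q_{\bar q'}$, which by noetherianity equals $\bigcap_{q' \in V_0} Q_{\bar q'}$ for a finite $V_0 \subset V$ and is therefore itself a closed complex subvariety of $U$ containing $V$. If $\dim W_V = \dim V$, take $W$ to be the unique irreducible component of $W_V$ containing $V$; then $W$ and $V$ agree as germs at $q$, so each $r_j$ vanishes on a nonempty open subset of the irreducible complex variety $W$ and hence on all of $W$ by real-analyticity, giving $W \subset S$. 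If $\dim W_V > \dim V$, apply the inductive hypothesis inside $W_V$ (after passing to a desingularization and using Remmert--Stein extension when $q$ is a singular point of $W_V$) to the real-analytic subvariety $W_V \cap S$, which still contains $V$. The main obstacle is this inductive extraction: bridging from Segre containment to containment in $S$ requires iterated descent through strictly smaller Segre intersections until one reaches the base case $\dim W_V = \dim V$, where real-analyticity and irreducibility close the argument; the technical handling of singular loci in the intermediate descents accounts for most of the work.
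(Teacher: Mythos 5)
First, a point of comparison: the paper does not prove this lemma at all. It is stated with the attribution ``Diederich--Forn\ae ss'' and cited as the claim in Section~6 of their paper \cite{DF:realbnd}; it is used as a black box. So there is no internal proof to measure you against. Your Segre-variety strategy is in the same circle of ideas as the original source, and your first two steps are correct and standard: the complexification argument showing $\hat r_j(z,\bar w)=0$ for all $z,w\in V$ (hence $V\subset Q_{\bar q'}$ for every $q'\in V$) is exactly the right opening move, and your base case $\dim W_V=\dim V$ closes correctly via irreducibility of $W$ and the identity principle for real-analytic functions on the connected manifold $W_{\mathrm{reg}}$.

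The genuine gap is in the inductive descent, which you flag as the main obstacle but do not actually carry out. When $\dim W_V>\dim V$ you propose to recurse on $S\cap W_V$ inside $W_V$; but if the defining functions you use for $S\cap W_V$ are the restrictions $r_j|_{W_V}$, their complexifications have Segre varieties equal to $Q^j_{\bar q'}\cap W_V$, and the intersection of these over $q'\in V$ is all of $W_V$ again --- no descent occurs. Making progress requires producing genuinely \emph{new} elements of the ideal of $S$ (or of $S\cap W_V$ inside $W_V$) whose Segre varieties cut $W_V$ down strictly, and proving that the process terminates in a complex variety contained in $S$ rather than merely in a decreasing chain of complex varieties containing $V$; this is precisely the content of the Diederich--Forn\ae ss argument and it is missing here. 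Several related points are also unaddressed: the Segre varieties depend on the choice of the $r_j$ (one needs generators of the full ideal of real-analytic germs vanishing on $S$, and since real-analytic sets are not coherent a single finite generating set need not generate at nearby points $q\in U'$, which threatens the uniformity in $q$ that the statement demands); the degenerate case where $\hat r_j(\cdot,\bar q')\equiv 0$ for all $j$ and all $q'\in V$, so that $W_V=U'$ and no first descent happens at all (this case can in fact be salvaged, since then $V$ lies in the complex variety $\bigcap_j\{z:\hat r_j(z,\cdot)\equiv 0\}\subset S$, but you do not say so); and the treatment of singular points of $W_V$, where ``desingularization and Remmert--Stein'' is named but not executed. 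As it stands the proposal is a correct opening and an accurate diagnosis of where the difficulty lies, but not a proof.
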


The lemma has the following useful corollary.

\begin{cor} \label{cor:localcplxisglobalcplx}
Let $U \subset \C^n$ be an open set and
let $X \subset U$ be a real-analytic subvariety.
Suppose that there exists an open dense set $E \subset X$
such that
for every $p \in E$ there exists a neighborhood $U'$ of
$p$ such that $X \cap U'$ is a complex manifold.  Then $X$ is a
complex analytic subvariety of $U$.
\end{cor}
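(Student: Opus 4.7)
My approach uses Lemma~\ref{DF:lemma} to promote the local complex manifold structure on the dense open set $E$ to a global closed complex analytic subvariety structure on $X$. The argument proceeds in two parts: handling points of $E$ directly via one application of the lemma, and extending to $X\setminus E$ via Bishop's compactness theorem for complex subvarieties of uniformly bounded volume.

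First I would show that $X$ is locally a closed complex analytic subvariety at every $p\in E$. The germ $(X,p)$ is itself the germ of a complex manifold of some dimension $k$, hence a germ of complex variety contained in $(X,p)$. Applying Lemma~\ref{DF:lemma} at $p$ with $S=X$ produces a neighborhood $U'$ and a closed complex subvariety $W\subset U'$ with $(X,p)\subset(W,p)$ and $W\subset X\cap U'$; the two inclusions force $(W,p)=(X,p)$, so $X=W$ in a smaller neighborhood of $p$. Consequently $X$ has pure complex dimension $k$ near every point of $E$, and density of $E$ in $X$ rules out any real-analytic stratum of $X$ of different dimension: a stratum of dimension larger than $2k$ would meet $E$ and contradict the local complex-manifold dimension there, while an isolated lower-dimensional stratum would fail to lie in $\overline{E}$. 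Hence $X$ has pure real dimension $2k$ throughout.

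Next, for $p\in X\setminus E$ I would apply Lemma~\ref{DF:lemma} at $p$ to obtain a neighborhood $U'$, and pick a sequence $q_n\in E\cap U'$ with $q_n\to p$. Each germ $(X,q_n)$ is a complex manifold of dimension $k$, so the lemma provides closed complex subvarieties $W_n\subset U'$ with $W_n\subset X\cap U'$ and $(W_n,q_n)=(X,q_n)$; after discarding lower-dimensional irreducible components, each $W_n$ has pure complex dimension $k$. Since the $W_n$ lie in the bounded set $X\cap U'$, their $2k$-volumes are uniformly controlled, and Bishop's compactness theorem yields a subsequence converging (as closed positive currents, and in the Hausdorff sense on supports) to a closed complex subvariety $W_\infty\subset U'$ of pure complex dimension $k$. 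Closedness of $X$ gives $W_\infty\subset X$, and $q_n\in W_n$ with $q_n\to p$ forces $p\in W_\infty$. To conclude $(W_\infty,p)=(X,p)$ as germs, one repeats the construction along sequences $q_n$ approaching $p$ through each of the finitely many local real-analytic irreducible components of $X$ at $p$, each meeting $E$ by density, and takes the finite union of the resulting Bishop limits. This exhibits $(X,p)$ as a finite union of complex variety germs of dimension $k$, hence as a complex variety germ itself. Since $p\in X$ was arbitrary, $X$ is a closed complex analytic subvariety of $U$.

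The principal obstacle is the dimension bookkeeping at points of $X\setminus E$: one must match the dimension of $W_\infty$ with the local real dimension of $X$ at $p$ and ensure that every local irreducible component of $X$ at $p$ is captured by some Bishop limit. The first point is handled by the pure-dimensionality of $X$ established in the first part, and the second by running the DF-plus-Bishop construction separately on each component. No circularity arises, since each application of Lemma~\ref{DF:lemma} is for a fixed neighborhood $U'$ determined at the outset.
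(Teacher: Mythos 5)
The paper states this corollary without proof (it is offered as a direct consequence of Lemma~\ref{DF:lemma}), so there is no official argument to compare against; judged on its own terms, your proposal starts correctly (apply Lemma~\ref{DF:lemma} to the complex-manifold germs of $X$ at points of $E$) but has two genuine gaps. First, the pure-dimensionality claim concluding your first part is false: take $X = \{z_1 = z_2 = 0\} \cup \{z_3 = 0\} \subset \C^3$ with $E = X \setminus \{0\}$; the hypotheses and the conclusion both hold, yet $X$ is not pure-dimensional, and your dichotomy (``a larger stratum would meet $E$''; ``an isolated lower-dimensional stratum would not lie in $\overline{E}$'') does not cover a lower-dimensional component that is itself contained in $E$. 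Since you later invoke pure-dimensionality to control the Bishop limits, this is not cosmetic. Second, and more seriously, the final step is asserted rather than proved: a Bishop limit $W_\infty$ of the $W_n$ contains $p$ (because $q_n \in W_n$ and $q_n \to p$) and is contained in $X$, but nothing in your argument shows that $W_\infty$ contains the germ of the relevant local component of $X$ at $p$, rather than being merely some $k$-dimensional variety passing through $p$ inside $X$. (Also, the stated volume bound is misjustified: containment in a bounded set does not bound the $2k$-volume of a complex subvariety; what does is $W_n \subset X$ together with local finiteness of $\mathcal{H}^{2k}$ on the $2k$-dimensional semianalytic set in question.)

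The ingredient that closes this gap also renders Bishop's compactness unnecessary. The whole strength of Lemma~\ref{DF:lemma} is that the neighborhood $U'$ is fixed once $p$ is fixed and serves every $q \in U'$ simultaneously. Let $\Omega$ be a connected component of the set of points where $X$ is locally a complex manifold (this set contains $E$, is semianalytic, and so has locally finitely many components) with $p \in \overline{\Omega}$, and pick $q \in \Omega \cap U'$. Then $W_q$ is a closed complex subvariety of all of $U'$ containing an open piece of the connected complex manifold $\Omega$, so by the identity principle $W_q$ contains the entire component of $\Omega \cap U'$ through $q$, hence its closure, hence the germ of $\overline{\Omega}$ at $p$. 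Choosing one such $q$ for each of the finitely many components clustering at $p$ exhibits $X$ near $p$ as a finite union of closed complex subvarieties of $U'$ intersected with a smaller neighborhood, which is the desired conclusion. Without this propagation step your Bishop limits need not fill out the germ of $X$ at $p$; with it, you do not need Bishop at all.
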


\begin{proof}[Proof of Theorem~\ref{thm:generalsingthm}]
Let us begin with \eqref{generalsingthm:scaplitem}.  First look at the
inverse image $f^{-1}(L)$.  This set is a real subvariety of $N$, though
we cannot immediately conclude that $f^{-1}(L)$ is a complex variety.

Let $F$ be the unique holomorphic extension of $f$ to a neighborhood of $N$.
Near points of $N \setminus f^{-1}(S)$, the map $F$ is locally biholomorphic,
by Lemma~\ref{lemma:singjacob}.
Hence, $f^{-1}(L) \setminus f^{-1}(S)$ is a complex analytic variety.

If $f^{-1}(L) \setminus f^{-1}(S)$ is empty, then we are finished as
$L \subset S$.  Let us assume that $L$ is irreducible.  As $f$ is a
diffeomorphism, $f^{-1}(L)$ is also an irreducible subvariety.
So suppose that
$f^{-1}(L) \setminus f^{-1}(S)$ is nonempty, and therefore
an open dense subset of $f^{-1}(L)$.
By applying Corollary~\ref{cor:localcplxisglobalcplx}
we obtain that $f^{-1}(L)$ must be a complex variety.  As $F^{-1}(S)$ is defined
by a single holomorphic function $J_F$ and furthermore, it follows from \eqref{mozart} that
\begin{equation}
f^{-1}(L\cap S) = f^{-1}(L)\cap f^{-1}(S) = f^{-1}(L)\cap F^{-1}(S),
\end{equation}
then $E:= f^{-1}(L \cap S)$ must
be a complex subvariety of dimension $j-1$ (or empty).  As $L\cap S$ is a
real-analytic subvariety, we can invoke
Corollary~\ref{cor:localcplxisglobalcplx} again to conclude that $L\cap S$
is complex variety as follows.
If $p\in L\cap S$ is a regular point and $q=f^{-1}(p)\in E$ then $q$
is a regular point of $E$ as $f$ is a diffeomorphism.  Furthermore, $E$ is a
complex manifold near $q$, i.e., $T_q E=T_q^c E$.
Here, $T_q^c E:=T_q E \cap J(T_qE)$
is the complex tangent space at $q$ of $E$. Thus, as $f$ is a CR map and diffeomorphism,
\begin{equation}
 T_p(L\cap S) = f_*(T_q E) = f_*(T_q^c E) \subset T_p^c (L\cap S).
\end{equation}
Consequently, $T_p(L\cap S) = T_p^c(L\cap S)$.
Therefore, $L\cap S$ is a complex manifold near $p$. Since the
regular part of $L \cap S$ is dense, $L\cap S$ is a complex manifold
near all points on an open dense subset of $L\cap S$ and hence $L\cap S$ is
complex variety.

Let us now move to \eqref{generalsingthm:generalscapl}.
As $S \cap A(0,\Delta)$
is a nonempty proper subset, we know it is a proper complex
subvariety by \eqref{generalsingthm:scaplitem}.  Without
loss of generality we can rescale $A$ such that
$S \cap A(0,\Delta) = \{ p \}$.  Suppose for contradiction that
$S \cap A(t,\overline{\Delta})$ is empty for all $0<t\leq \epsilon'$, in
other words, $A(t,\overline{\Delta}) \subset M\setminus S$ for all $0<t\leq \epsilon'$ for some $\epsilon'>0$.  Then
by the Kontinuit\"atssatz (see, e.g., \cite{Shabat:book}*{page 190}) any holomorphic function
defined on a neighborhood of $M \setminus S$ extends to a neighborhood of
$A(0, \overline \Delta)$ (and hence past $p$).
In view of Lemma~\ref{lemma:nonextendible}, we obtain a contradiction. 

\eqref{generalsingthm:jdimitem} follows as a consequence of
\eqref{generalsingthm:scaplitem} and \eqref{generalsingthm:generalscapl}.
\end{proof}


\section{Levi-flats} \label{section:lf}

In this section we study CR singular,
Levi-flat submanifolds in $\C^n$.  Unlike in two dimensions where the CR dimension is 0, a codimension
2, CR submanifold in $\C^3$ or larger
must have nontrivial CR geometry.  The simplest
case is the Levi-flat.  A CR manifold is said to be \emph{Levi-flat} if the
Levi-form vanishes identically. Equivalently, for each $p\in M$, there is
a neighborhood $U$ of $p$ such that $M\cap U$ is foliated by complex manifolds
whose leaves $L_c$ satisfy $T_qL_c = T_qM \cap J(T_qM)$ for all $q\in M \cap V$ and all $c$.
This foliation is unique, it is simply the foliation by CR orbits.

In the real-analytic case, a generic Levi-flat submanifold of codimension $d$ is
locally biholomorphic to the submanifold defined by
\begin{equation} \label{eq:deflf}
\Im z_1 = 0, \Im z_2 = 0 , \dots, \Im z_d = 0 ,
\end{equation}
that is, a submanifold locally equivalent to $\R^d \times \C^{n-d}$.
The situation is different if we allow a CR singularity. The fact that
there are infinitely many different CR singular
Levi-flat submanifolds not locally biholomorphically equivalent is already
evident from the theory of Bishop surfaces in $\C^2$ (see Section~\ref{section:twodim}).

We apply the result in the previous section to study CR singular Levi-flats
that are images of a neighborhood of $\R^d \times \C^{n-d}$ under a CR
diffeomorphism.  We show that in dimension 3 and higher, unlike in 2
dimensions, there exist Levi-flats that are not images of a CR submanifold.
First, let us study the CR singular set of an image of $N = \R^d \times
\C^{n-d}$. In this case, the Levi foliation on $N$  gives rise to a
real-analytic foliation $\mathcal L$ on $M$ that coincides with the
Levi-foliation on $M\setminus S$. Moreover, it is readily seen that the leaves of
$\mathcal L$ are complex submanifolds (even near points in $S$) and thus $M$
is also foliated by complex manifolds. We call a ``leaf'' of $M$ a leaf of
this foliation.

\begin{thm} \label{thm:lfsing}
Let $n \geq 3$, $n > d \geq 2$.
Let $U \subset \R^d \times \C^{n-d}$ be a connected open set and let
$f \colon U \to \C^n$ be a real-analytic CR map that is a diffeomorphism onto
its image, $M = f(U)$.
Let $S \subset M$ be the CR singular set of $M$ and suppose that
$M$ is generic at some point.
\begin{enumerate}[(i)]
\item \label{lfsing:item1}
If $(x,\xi)$ are the coordinates in $\R^d \times \C^{n-d}$, then
the set $f^{-1}(S)$ is locally the zero set of a real-analytic
function that is holomorphic in $\xi$.
\item \label{lfsing:item2}
If $L$ is a leaf of $M$, then $S \cap L$ is either
empty or a complex
analytic variety of dimension $n-d$ or $n-d-1$ .
\item \label{lfsing:item3}
If $S \cap L$ is of dimension $n-d-1$, then
$S$ must intersect all the leaves in some neighborhood of $L$.
\end{enumerate}
\end{thm}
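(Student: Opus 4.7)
The plan is to combine Lemma~\ref{lemma:singjacob} with Theorem~\ref{thm:generalsingthm} and the explicit description of the Levi foliation on $N=\R^d\times \C^{n-d}$.

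For \eqref{lfsing:item1} I would let $F$ denote the holomorphic extension of $f$ to a neighborhood of $N$ in $\C^n$ and consider its holomorphic Jacobian $J_F$. Since $M$ is generic somewhere, $J_F\not\equiv 0$, and Lemma~\ref{lemma:singjacob} gives $f^{-1}(S)=\{J_F=0\}\cap N$ locally. In the coordinates $(x,\xi)$ on $N$ one has $z_j=x_j$ for $j\le d$ and $z_{d+k}=\xi_k$, so the restriction of $J_F$ to $N$ is the restriction of a holomorphic function of $(z_1,\ldots,z_n)$ to the real slice $\Im z_j=0$, $j\le d$; it is therefore real-analytic in $x$ and holomorphic in $\xi$.

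For \eqref{lfsing:item2} I would first observe that every leaf of $M$ is a complex submanifold of $\C^n$ of complex dimension $n-d$. Indeed, a leaf of $N$ has the form $(\{x_0\}\times\C^{n-d})\cap U$, which is a complex submanifold of $\C^n$; because $F$ is holomorphic its restriction to this leaf is holomorphic into $\C^n$, and because $f$ is a diffeomorphism this restriction is a holomorphic embedding. Applying Theorem~\ref{thm:generalsingthm}\eqref{generalsingthm:scaplitem} with $j=n-d$ then yields that $S\cap L$ is either empty or a complex subvariety of $L$ of dimension $n-d$ or $n-d-1$.

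For \eqref{lfsing:item3} the idea is to produce a continuous family of analytic discs that traverses the nearby leaves and then invoke Theorem~\ref{thm:generalsingthm}\eqref{generalsingthm:generalscapl}. Assume $\dim_{\C}(S\cap L)=n-d-1$, pick $p\in S\cap L$, and write $p=f(x_0,\xi_0)$. Because $S\cap L$ is a proper complex subvariety of $L$, its preimage in $\{x_0\}\times\C^{n-d}$ is nowhere dense, so one may choose an analytic disc $\eta\colon\overline{\Delta}\to\C^{n-d}$ with $\eta(0)=\xi_0$, $\{x_0\}\times\eta(\overline{\Delta})\subset U$, and $\eta(\overline{\Delta})\not\subset f^{-1}(S)$. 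For each unit vector $e\in\R^d$ I would set
\begin{equation}
A(t,\zeta):=f(x_0+te,\eta(\zeta))\qquad\text{for small } t\ge 0,
\end{equation}
so that each slice $A(t,\cdot)$ is a nonconstant analytic disc lying in the leaf through $(x_0+te,\xi_0)$, $A(0,0)=p\in S$, and $A(0,\overline{\Delta})\setminus S$ is nonempty by the choice of $\eta$. Theorem~\ref{thm:generalsingthm}\eqref{generalsingthm:generalscapl} then yields $A(t,\overline{\Delta})\cap S\neq\emptyset$ for all small $t>0$, showing that the leaf corresponding to $x_0+te$ meets $S$; letting $e$ range over the unit sphere, every leaf in a neighborhood of $L$ intersects $S$.

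The delicate step is \eqref{lfsing:item3}: one has to verify that $A$ really is a continuous family of analytic discs in the sense of Section~\ref{section:discfam} (immediate from real-analyticity of $f$ and the product structure of $N$), and arrange $A(0,\overline{\Delta})\setminus S\neq\emptyset$, which is exactly where the dimension gap $\dim_{\C}(S\cap L)<\dim_{\C}L$ is used. Parts \eqref{lfsing:item1} and \eqref{lfsing:item2} reduce to direct applications of Lemma~\ref{lemma:singjacob} and Theorem~\ref{thm:generalsingthm}\eqref{generalsingthm:scaplitem}, respectively.
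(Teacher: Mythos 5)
Your proposal is correct and follows essentially the same route as the paper: part (i) via Lemma~\ref{lemma:singjacob} applied to the holomorphic extension $F(z,\xi)$ of $f$, part (ii) by noting each leaf is an $(n-d)$-dimensional complex manifold and invoking Theorem~\ref{thm:generalsingthm}\eqref{generalsingthm:scaplitem}, and part (iii) by building a one-parameter family of analytic discs inside the leaves and applying Theorem~\ref{thm:generalsingthm}\eqref{generalsingthm:generalscapl}. The only cosmetic difference is that the paper moves the disc family along an arbitrary curve $x(t)$ in $\R^d$ rather than along radial segments $x_0+te$; both versions carry the same (mild, and identical to the paper's) implicit uniformity step in concluding that a full neighborhood of leaves is covered.
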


In particular, the theorem says that the CR singularity cannot be isolated
if $M$ is an image of $\R^{d} \times \C^{n-d}$.  In fact, the CR singularity
cannot be a real one-dimensional curve either.  It cannot be a curve inside
a leaf $L$ as it is a complex variety when intersected with $L$.  When
it is a point, $S$ must intersect all leaves nearby, and there is at
least a 2-dimensional family of leaves of $M$.  That is, the singular set
is always 2 or more dimensional.

For example when $n=3$, $d=2$, it is possible that the singular set is either
2 or 3 dimensional.  We show below that where it is CR it must be
Levi-flat in the following sense.  If we include complex manifolds
among Levi-flat manifolds we can say that
a CR submanifold $K$ (not
necessarily a generic submanifold) is \emph{Levi-flat} if near every $p \in K$
there exist local coordinates $z$ such that $K$ is defined by
\begin{align}
& \Im z_1 = \Im z_2 = \dots = \Im z_j = 0 \\
& z_{j+1} = z_{j+2} = \dots = z_{j+k} = 0.
\end{align}
for some $j$ and $k$, where we interpret $j=0$ and $k=0$ appropriately.
This definition includes complex manifolds ($j=0$) and generic
Levi-flats ($k=0$), although we generally call complex submanifolds
complex rather than Levi-flat.

\begin{cor} \label{cor:lfcor}
Let $n \geq 3$, $n > d \geq 2$.
Let $U \subset \R^d \times \C^{n-d}$ be a connected open set and let
$f \colon U \to \C^n$ be a real-analytic CR map that is a diffeomorphism onto
its image, $M = f(U)$.
Let $S \subset M$ be the CR singular set of $M$ and suppose that
$M$ is generic at some point.
If $S$ is nonempty, then it is of real dimension at least $2(n-d)$.
Furthermore, near points where $f^{-1}(S)$ is a CR submanifold, it is
Levi-flat or complex.
\end{cor}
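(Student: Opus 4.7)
\emph{Part 1 (Dimension bound).} Fix $p \in S$ and let $L$ be the leaf of $M$ through $p$. By Theorem~\ref{thm:lfsing}\eqref{lfsing:item2}, $S \cap L$ has complex dimension $n-d$ or $n-d-1$. In the first case $L \subset S$ locally, so $\dim_\R S \geq 2(n-d)$ immediately. In the second case, Theorem~\ref{thm:lfsing}\eqref{lfsing:item3} guarantees that every leaf $L_x$ for $x$ near $x_0 \in \R^d$ meets $S$ in a complex subvariety of dimension $\geq n-d-1$ (again by part~\eqref{lfsing:item2}). The projection $(x,\xi) \mapsto x$ of $f^{-1}(S) \subset \R^d \times \C^{n-d}$ onto the $\R^d$ factor then has image of real dimension $d$ and fibers of real dimension $\geq 2(n-d-1)$, whence
\begin{equation*}
\dim_\R S \;=\; \dim_\R f^{-1}(S) \;\geq\; d + 2(n-d-1) \;\geq\; 2(n-d),
\end{equation*}
the final inequality using the hypothesis $d \geq 2$.

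\emph{Part 2 (Levi-flat structure at CR points).} By Lemma~\ref{lemma:singjacob} one has $f^{-1}(S) = V \cap N$ locally, where $V := \{J_F = 0\}$ is a proper complex analytic hypersurface of $\C^n$ (proper because the generic point hypothesis forces $J_F \not\equiv 0$ by analytic continuation from $N$). Fix a point $q$ at which $Y := f^{-1}(S)$ is a smooth CR submanifold. The main case occurs when some $l > d$ satisfies $\partial J_F/\partial z_l(q) \neq 0$: the complex implicit function theorem gives $V = \{z_l = \phi(z_1, \ldots, \hat{z}_l, \ldots, z_n)\}$ near $q$ for a holomorphic $\phi$, and the biholomorphism $\tilde z_l := z_l - \phi$, $\tilde z_k := z_k$ for $k \neq l$, straightens $V$ to $\{\tilde z_l = 0\}$ while preserving the defining equations $\Im z_j = 0$ of $N$ (since $l > d$, these equations involve only unaltered coordinates). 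Hence
\begin{equation*}
Y \;=\; \{\tilde z_l = 0,\ \Im \tilde z_1 = \cdots = \Im \tilde z_d = 0\},
\end{equation*}
which is the paper's Levi-flat form with $j = d$ and $k = 1$.

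\emph{Main obstacle (the degenerate case).} The harder step is handling the vanishing $\partial J_F/\partial z_l(q) = 0$ for every $l > d$. In this situation a direct Taylor expansion analysis of $V \cap N$, modelled on the example $J_F = z_1 + z_2^2$ in $\C^2$ (where $V \cap N = \{x_1 + z_2^2 = 0\}$ decomposes into two real parabolas meeting transversally at the origin), forces smoothness of $Y$ at $q$ to exclude all genuine $\xi$-dependence of the defining function: $V$ must have a local defining function depending only on $z_1, \ldots, z_d$, i.e., $V = V_0 \times \C^{n-d}$ near $q$ for some complex hypersurface $V_0 \subset \C^d$. Then $Y = X_0 \times \C^{n-d}$ with $X_0 := V_0 \cap \R^d$ a real-analytic subvariety of $\R^d$. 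Picking a real-analytic diffeomorphism of $\R^d$ that locally straightens $X_0$ to $\{u_1 = \cdots = u_k = 0\}$ and extending it to a biholomorphism of $\C^d$ (using that real-analytic diffeomorphisms complexify to germs of biholomorphisms) places $Y$ into the paper's Levi-flat form $\{\tilde z_1 = \cdots = \tilde z_k = 0,\ \Im \tilde z_{k+1} = \cdots = \Im \tilde z_d = 0\}$; when $k = d$ this is the purely complex case $\{\tilde z_1 = \cdots = \tilde z_d = 0\}$, accounting for the ``or complex'' alternative.
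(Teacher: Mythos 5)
Your Part 1 is correct and is essentially the paper's own argument: take the leaf $L$ through $p$, apply parts (ii) and (iii) of Theorem~\ref{thm:lfsing}, and count $d+2(n-d-1)\geq 2(n-d)$ using $d\geq 2$. Your non-degenerate case in Part 2 is also a correct (and more explicit) rendering of what the paper asserts at such points.

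The gap is in your ``degenerate case.'' The claim that smoothness of $Y=V\cap N$ at $q$ forces $V=\{J_F=0\}$ to admit a local defining function independent of $\xi$ (so that $V=V_0\times\C^{n-d}$) is false, and no actual argument is given for it beyond an analogy with one two-variable example. Concretely, take $n=3$, $d=2$, a defining function $J(z_1,z_2,\xi)=z_1+z_2\xi$ for $V$, and the point $q=(0,0,i)$. Then $q\in V\cap N$ and $\partial J/\partial\xi=z_2$ vanishes at $q$, so you are in the degenerate branch; on $N$ one has $J=x_1+x_2\Re\xi+ix_2\Im\xi$, and since $\Im\xi\neq 0$ near $q$ the set $V\cap N$ is locally $\{x_1=x_2=0\}$, a smooth complex line --- yet $V=\{z_1=-z_2\xi\}$ has genuine $\xi$-dependence and is in no sense a product $V_0\times\C_\xi$ near $q$. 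So the conclusion of the corollary holds at $q$ (the set is complex), but not for the reason you give, and your subsequent straightening of $V_0\subset\C^d$ has nothing to apply to. The argument that works uniformly --- and is what the paper's terse proof is pointing at via part (i) of Theorem~\ref{thm:lfsing} --- does not pass through the geometry of $V$ at all: because the defining function of $f^{-1}(S)$ is holomorphic in $\xi$, each slice $Y\cap(\{x\}\times\C^{n-d})$ is a complex subvariety of the corresponding leaf, so at a regular CR point $Y$ is filled by complex varieties of dimension equal to its CR dimension, and Levi-flatness (or complexity) follows from that foliated structure; the first-order behaviour of $J_F$ in $\xi$ at $q$ is irrelevant. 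As written, your case split leaves the CR points falling into the degenerate branch unproved.
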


\begin{proof} Suppose $p\in S$ and $L$ is the leaf passing through $p$. From
Theorem~\ref{thm:lfsing}, $L\cap S$ is a complex variety of complex dimension
$n-d$ or $n-d-1$ near $p$. If $L\cap S$ is of dimension $(n-d)$ near $p$, then we are done.
Otherwise, using Theorem~\ref{thm:lfsing} again, we have that $S$ intersects a
$d$-parameter family of leaves near $p$. Therefore, the real dimension of $S$ near $p$ is $2(n-d-1) +d \geq 2(n-d)$.

Let $(x,\xi)$ be our parameters in $U$ as in the theorem, then
$f^{-1}(S)$ is given by a real-analytic function that is
holomorphic in $\xi$.  In other words, $f^{-1}(S)$ is a subvariety that is a Levi-flat
submanifold at all regular points where it is CR.  To see this fact, it is enough
to look at a generic point of $f^{-1}(S)$.
\end{proof}

The fact that $f^{-1}(S)$ is Levi-flat does not imply
that $S$ must be Levi-flat.  This is precisely because the complex Jacobian of the holomorphic
extension $F$ of $f$ vanishes on $f^{-1}(S)$.  In fact, as the examples in
Section~\ref{section:examples} suggest, the CR structure of $f^{-1}(S)$ and $S$
may be quite different.  In particular, the CR dimension of $S$
may be strictly greater than the CR dimension of $f^{-1}(S)$.
What we can say for sure is that through each point,
$S$ must contain complex varieties of complex dimension at least
$n-d-1$. Using Theorem~\ref{thm:generalsingthm}, we can obtain the following
information on the CR structure of $S$.

\begin{cor}
Let $M$ be as above and let $p$ be a generic point of $S$ (in particular, $S$ is CR near $p$). Suppose $L$ is the leaf on $M$ through $p$, that is, an image of the leaf of the
Levi-foliation through $f^{-1}(p)$ and let $E=S\cap L$.
\begin{enumerate}[(i)]
\item The following table lists all the possibilities for the CR structure of $S$ near $p$  when the codimension is $d=2$ (DNO means `Does not occur').
\medskip

\begin{center}
\begin{tabular}{|c|c|c|}
\hline
\diaghead{dimRSCRdimS}{$\dim_{\R}(S)$}{$\CRdim(S)$}
 & $n-3$ & $n-2$ \\
\hline
$2n-4$ &
\begin{tabular}{c} $S$ is Levi-flat \\$\dim_{\mathbb{C}}E= n-3$ \end{tabular}
&
\begin{tabular}{c} $S$ is complex \\
$\dim_{\mathbb{C}}E = n-2$ or \\$\dim_{\mathbb{C}}E = n-3$ \end{tabular}
\\ \hline
$2n-3$ & DNO & \begin{tabular}{c}  $S$ is Levi-flat \\ $\dim_{\mathbb{C}}E = n-2$ \end{tabular} \\ \hline
\end{tabular}
\end{center}
\medskip

\item The following table lists all the possibilities for the CR structure
of $S$ near $p$  when the codimension is $d=3$ ($\dagger$ marks the case when it
is not known if $S$ must necessarily be Levi-flat).
\medskip

\begin{center}
\begin{tabular}{|c|c|c|c|}
\hline
\diaghead{dimRSCRdimS}{$\dim_{\R}(S)$}{$\CRdim(S)$}
 & $n-4$ & $n-3$ & $n-2$ \\
\hline
$2n-6$ & DNO & \begin{tabular}{c}  $S$ is complex \\ $\dim_{\mathbb{C}}E = n-3$ \end{tabular}& DNO \\ \hline
$2n-5$ & \begin{tabular}{c} $S$ is Levi-flat \\$\dim_{\mathbb{C}}E= n-4$ \end{tabular}& \begin{tabular}{c} $S$ is Levi-flat \\
$\dim_{\mathbb{C}}E = n-3$ \\ \hline $\dim_{\mathbb{C}}E = n-4\ \dagger$ \end{tabular} & DNO \\ \hline
$2n-4$ & DNO & \begin{tabular}{c} $S$ is Levi-flat \\ $\dim_{\mathbb{C}}E = n-3$ \end{tabular} & \begin{tabular}{c} $S$ is complex\\
$\dim_{\mathbb{C}}E = n-3$\end{tabular}  \\ \hline
\end{tabular}
\end{center}
\end{enumerate}
\end{cor}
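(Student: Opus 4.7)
The plan is to fix a generic point $p \in S$, let $L$ denote the leaf of $M$ through $p$ and set $E = S \cap L$, then enumerate the admissible pairs $(\dim_\R S, \CRdim(S))$ and in each case identify the forced value of $\dim_\C E$ together with the structure type.

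I would begin by collecting the universal constraints at $p$. By Theorem~\ref{thm:lfsing}(ii), $E$ is nonempty and $\dim_\C E \in \{n-d-1, n-d\}$. By Corollary~\ref{cor:lfcor}, $2(n-d) \leq \dim_\R S$, and since $S$ is a proper real-analytic subvariety of $M$ we also have $\dim_\R S \leq 2n-d-1$. Since $E \subset S$ is a complex submanifold through $p$, $\CRdim(S) \geq \dim_\C E$, and of course $2\CRdim(S) \leq \dim_\R S$. When $\dim_\C E = n-d-1$, Theorem~\ref{thm:lfsing}(iii) shows that $S$ contains a real $d$-parameter family $\bigcup_{t \in \R^d} E_t$ of complex $(n-d-1)$-dimensional varieties whose union has real dimension $2n-d-2$, tightening $\dim_\R S \geq 2n-d-2$ in this subcase. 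Finally, if $A = \{t \in \R^d : L_t \subset S\}$, then $A$ is a proper real-analytic subvariety of $\R^d$ (else $S$ would be open in $M$), $\bigcup_{t \in A} L_t \subset S$ has real dimension $2(n-d) + \dim_\R A$, and at any generic point of this union the complex inclusion $T_q L_t \subset T_q S$ forces $\CRdim(S) \geq n-d$.

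With these constraints the only a priori possible pairs are the listed entries and the DNO entries. The three easy DNO exclusions are $(2n-6, n-2)$ and $(2n-5, n-2)$ in the $d=3$ table (they violate $2\CRdim(S) \leq \dim_\R S$), and $(2n-6, n-4)$ in the $d=3$ table (it violates $\dim_\R S \geq 2n-d-2$). The main obstacle is the pair of remaining DNO entries $(2n-3, n-3)$ for $d=2$ and $(2n-4, n-4)$ for $d=3$, both of the form $(\dim_\R S, \CRdim(S)) = (2n-d-1, n-d-1)$. Here $\CRdim(S) \geq \dim_\C E$ forces $\dim_\C E = n-d-1$. Since $\CRdim(S) \geq n-d$ at every generic point of $\bigcup_{t \in A} L_t$ but $\CRdim(S) = n-d-1$ at $p$, the generic $p$ must lie in $\bigcup_{t \notin A} E_t$, which has real dimension at most $2(n-d-1) + d = 2n-d-2 < 2n-d-1$, a contradiction.

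For each surviving pair, $\dim_\C E$ is then forced by analogous bookkeeping (if $\bigcup_t E_t$ alone cannot account for $\dim_\R S$ then $\dim_\C E = n-d$ generically; if $\CRdim(S) < n-d$ then $\dim_\C E = n-d-1$), and the structure follows. When $2\CRdim(S) = \dim_\R S$ the submanifold $S$ is complex of complex dimension $\CRdim(S)$. In the remaining surviving entries, the dimension count locally presents $S$ as the union of complex submanifolds of complex dimension exactly $\CRdim(S)$: either the $L_t$ for $t$ in a real $j$-dimensional piece of $A$ with $j = \dim_\R S - 2\CRdim(S)$ (when $\dim_\C E = n-d$), or the $E_t$ for $t \in \R^d$ (when $\dim_\C E = n-d-1$ and $j = d$). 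In each such case these submanifolds are the Levi leaves of $S$ because their complex tangent spaces exhaust $T^{1,0}_p S$, so $S$ is Levi-flat in the extended sense of the text. The sole case where this last step breaks is the dagger entry $(2n-5, n-3)$ with $\dim_\C E = n-4$ in the $d=3$ table: here the $E_t$ have complex dimension only $n-4 < n-3 = \CRdim(S)$, so they are not the Levi leaves of $S$, and our tools exhibit $S$ as foliated by proper complex subvarieties but do not determine whether $S$ itself must be Levi-flat.
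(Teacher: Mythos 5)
Your proof is correct and follows essentially the same route as the paper: enumerate the pairs $(\dim_\R S,\CRdim S)$ subject to the dimension constraints coming from Corollary~\ref{cor:lfcor}, Theorem~\ref{thm:lfsing}, and the inclusion $E\subset S$, then read off ``complex'' when $2\CRdim S=\dim_\R S$ and ``Levi-flat'' when $S$ is foliated by the $E_t$ or $L_t$ of complex dimension equal to $\CRdim S$, with the same single undecidable ($\dagger$) cell. The only cosmetic difference is that you obtain the key count $\dim_\R S=2(n-d-1)+d$ (when $\dim_\C E=n-d-1$) from a two-sided bound via your set $A$ of leaves contained in $S$, whereas the paper gets it by choosing $p$ generic so that $\dim_\C(S\cap L_q)$ is constant in $q$; both are the same dimension count over the leaf space.
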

\medskip

We see that when codimension $d=2$, then $S$ must be Levi-flat or complex,
and in fact we understand precisely the CR structure of $S$ at a generic
point.  Also given the examples in Section~\ref{section:examples}, all the
possibilities for $d=2$ actually occur.

When $d=3$ there is one case where we cannot decide if $S$ is Levi-flat or
not using Theorem~\ref{thm:generalsingthm}.

\begin{proof}
Note that as $p$ is generic, we can assume that $S$ is
a CR submanifold near $p$ and $p$ can be chosen such that
$\dim_\C S \cap L_q$ is constant on $q \in S$. Here, $L_q$ is the leaf through
$q$.

Let us start with $d=2$.
Let us first note that
the dimension of $S$ must be greater than
or equal to $2(n-d) = 2n-4$, so the possibilities are
$2n-4$ and $2n-3$.  Since $S \cap L$ is contained in $S$,
the CR dimension of $S$ must be greater than or equal to that of $S \cap L$. Hence
it is either $n-3$ or $n-2$.  If $\CRdim S = n-3$, then
$\dim_\C S \cap L = n-3$ of course and so $S$ must
be Levi-flat.  By
Theorem~\ref{thm:generalsingthm}, $S$ intersects all leaves near $L$ and
hence must have real
dimension $2(n-3)+2= 2n-4$ as there is a 2-dimensional family of
leaves and the intersection with each of them
is of real dimension $2n-6$.

So now consider $\CRdim(S) = n-2$.  If $\dim S = 2n-4$, then $S$ must be
complex.  Both $\dim_\C S \cap L = n-3$ and $n-2$ are possible.
When $\dim S = 2n-3$, then necessarily $\dim_\C S \cap L = n-2$ by the above
argument, and so $S$ is Levi-flat.

The case $d=3$ follows similarly.  In this case we have
$2(n-d) = 2n-6 \leq \dim S \leq 2n-4$, and $\CRdim S \geq n-4$ as
$\dim_\C S \cap L \geq n-4$.  Similarly, when $\dim_\C S\cap L = n-4$,
then $\dim S = 2(n-4)+3 = 2n-5$.  The table fills in similarly as above.

The only case when we do not know if $S$ is flat or not is when $\dim_\C S \cap L = n-4$
and the CR dimension of $S$ is $n-3$.
\end{proof}

\begin{proof}[Proof of Theorem~\ref{thm:lfsing}]
Let us suppose that $f(0) = 0$ and $0 \in S$ for simplicity in the following
arguments.

Let us begin with \eqref{lfsing:item1}.  Suppose we have a real-analytic CR
map $f(x,\xi)$ from $U$ onto $M$, where $x \in \R^d$ and $\xi \in
\C^{n-d}$.  It follows that $f$ is holomorphic in $\xi$.
As in the proof of Lemma~\ref{lemma:singjacob}, the map $f$ extends to a holomorphic
maps $F(z,\xi)$ defined in an open set $\hat{U} \subset \C^d\times \C^{n-d}$
by simply replacing the real variable $x$ by a complex variable $z$.
$F$ sends $U$, as a generic submanifold of $\C^n$ defined by $z=\bar{z}$ in $\hat{U}$,
diffeomorphically onto $M$.  By Lemma~\ref{lemma:singjacob},
\begin{equation}
f^{-1}(S) = \{(z,\xi)\in \hat{U}: J_F(z,\xi) = 0\} \cap \{z=\bar{z}\}.
\end{equation}
Thus, in $(x,\xi)$-coordinates, $f^{-1}(S)$ is given by the vanishing of the
function $J_F(x,\xi)$ on $U$,
where $J_F(x,\xi)$ is real-analytic in $x$ and holomorphic in $\xi$.

For the proof of \eqref{lfsing:item2}, let $L$ be a leaf on $M$. Since the leaves of $M$ is parametrized
by $f(x,\xi)$, where $x$ is regarded as
a parameter, $L$ is a complex manifold of dimension $n-d$.
The conclusion of \eqref{lfsing:item2}
then follows from Theorem~\ref{thm:generalsingthm}.

Let us now prove \eqref{lfsing:item3}.  Fix $p \in S$ and
take a leaf $L$ of $M$ through $p$, and suppose $f(0) = p$.
Suppose that $S \cap L$ is $(n-d-1)$-dimensional.  Take
a one-dimensional curve $x \colon [0,\epsilon) \to \R^d$ such that
the leaves of $M$ given by $L_t = f(\{x(t)\} \times \C^{n-d} \cap U)$ do
not intersect $S$ for all $t > 0$.  We find the family of disks
$A \colon [0,\epsilon) \to \overline{\Delta}$ such that
$A(0,\overline{\Delta}) \setminus S \not= \emptyset$ (as $S \cap L$
is $n-d-1$ dimensional), and $A(t,\overline{\Delta}) \subset L_t$.
We can apply Theorem~\ref{thm:generalsingthm} to show that $S \cap L_t$
is nonempty.  As the curve $x$ was arbitrary, we are done.
\end{proof}

Let us prove a general proposition about identifying the CR singular set
for codimension two submanifolds.  It is particularly useful for computing
examples.

\begin{prop} \label{prop:singcomp}
Let $w = \rho(z,\bar{z})$ define a CR singular manifold $M$ of in coordinates
$Z=(z,w) \in \C^{n-1} \times \C$,
where $\rho$ is real-analytic such that $\rho = 0$ and $d\rho = 0$ at the origin.
Then, the CR singularity $S$ is defined precisely by
\begin{equation}\label{eqdefS}
S = \{ (z,w) \in \C^n :
\rho_{\bar{z}_k} (z,\bar{z}) = 0\
\text{for}\
k=1,\dots , n-1\} .
\end{equation}
\end{prop}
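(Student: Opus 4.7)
The plan is to unwind the rank-drop characterization of $S$ given in Section~\ref{section:prelim} directly using the single complex defining function $R(Z,\bar{Z}) := w - \rho(z,\bar{z})$. Since $M$ has codimension $d=2$, I would take as real defining functions $r_1 = (R+\bar R)/2$ and $r_2 = (R-\bar R)/(2i)$, and recall that
\begin{equation*}
S = \left\{ Z \in M : \rank_{\bC}\bigl( \partial r_j/\partial \bar Z_k \bigr)_{j,k} \leq 1 \right\} .
\end{equation*}
The $2\times 2$ change of basis from $(r_1,r_2)$ to $(R,\bar R)$ is invertible, so the rank I need to compute equals that of the $2 \times n$ matrix whose rows are $(\partial R/\partial \bar Z_k)_k$ and $(\partial \bar R/\partial \bar Z_k)_k$.

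Next I would compute these two rows explicitly. Since $R = w - \rho(z,\bar z)$, one has $\partial R/\partial \bar z_k = -\rho_{\bar z_k}$ for $k = 1,\dots,n-1$ and $\partial R/\partial \bar w = 0$; taking conjugates, $\partial \bar R/\partial \bar z_k = -\overline{\rho_{z_k}}$ and $\partial \bar R/\partial \bar w = 1$. Thus the matrix in question is
\begin{equation*}
\begin{pmatrix}
-\rho_{\bar z_1} & \cdots & -\rho_{\bar z_{n-1}} & 0 \\
-\overline{\rho_{z_1}} & \cdots & -\overline{\rho_{z_{n-1}}} & 1
\end{pmatrix}.
\end{equation*}

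From here the statement becomes pure linear algebra. The second row has a $1$ in the last entry, so it is never zero; consequently the rank is always at least $1$, and equals $2$ exactly when the first row is nonzero, i.e., when at least one $\rho_{\bar z_k}(z,\bar z)$ is nonzero. The rank therefore drops to $1$ precisely at points where $\rho_{\bar z_1} = \cdots = \rho_{\bar z_{n-1}} = 0$, which is the description of $S$ in \eqref{eqdefS}. Since there is genuinely no obstacle beyond bookkeeping, the only thing to be careful about is confirming that the asserted real codimension of $M$ forces the generic rank to be exactly $2$ (ensuring that $S$ is the rank-$\leq 1$ locus and not the rank-$\leq d'-1$ locus for some smaller $d'$); this follows from the hypothesis that $w = \rho$ genuinely defines a codimension-$2$ submanifold, guaranteed by the vanishing of $\rho$ to order at least two at $0$ together with the implicit presence of the independent variable $w$.
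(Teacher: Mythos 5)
Your proof is correct and follows essentially the same route as the paper's: both reduce the statement to the rank of the $2\times n$ matrix of $\bar Z$-derivatives of the defining functions, the only cosmetic difference being that you use the basis $(R,\bar R)$ while the paper uses $(\Re R,\Im R)$, which are related by an invertible linear map and so give the same rank locus. The final observation that rank $\leq 1$ forces the first row to vanish (because of the $0$ versus $1$ in the $\bar w$-column) is exactly the paper's conclusion.
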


\begin{proof}[Sketch of Proof]
We can take $r_1 = \Re(w-\rho)$ and $r_2 = \Im(w- \rho)$ to be real-valued
defining equation for $M$. As explained in Section~\ref{section:intro},
the CR singular set $S$ consists of those points in $M$ where the matrix (with $Z=(z,w)$)
\begin{equation}
\left[\frac{\partial r_j}{\partial \bar{Z}_k}\right]_{j,k}= \begin{bmatrix}
  \frac{-\rho_{\bar z_1}-\bar\rho_{\bar z_1}}{2} & \frac{-\rho_{\bar z_2}-\bar\rho_{\bar z_2}}{2}&\cdots & \frac{-\rho_{\bar z_{n-1}}-\bar\rho_{\bar z_{n-1}}}{2} & \frac{1}{2}\\
  \frac{-\rho_{\bar z_1}+\bar\rho_{\bar z_1}}{2i} & \frac{-\rho_{\bar z_2}+\bar\rho_{\bar z_2}}{2i}&\cdots & \frac{-\rho_{\bar z_{n-1}}+\bar\rho_{\bar z_{n-1}}}{2i} & -\frac{1}{2i}
\end{bmatrix}
\end{equation}
is of rank at most one, which  can only happen when $\rho_{\bar{z}_k}(z,\bar{z}) = 0, k=1,\dots, n-1$. Therefore, 
 \eqref{eqdefS} follows.
\end{proof}
We conclude this section by the following example of a
Levi-flat codimension two submanifold $M$ of $
\C^{n-1}_z \times \C_w$ ($n\geq 3$) whose CR singular set is isolated.  Such a manifold is
then not an diffeomorphic image of
a codimension two generic submanifold on $\bC^{n}$ under CR map.
This conclusion follows by Theorem~\ref{thm:lfsing}.
\begin{example}\label{ex:notanimage}
Let $M$ be given by
\begin{equation}
w = \Re (z_1^2 + z_2^2 + \dots + z_{n-1}^2 ).
\end{equation}
The CR singular set of $M$ is the origin, by
Proposition~\ref{prop:singcomp}. Furthermore, $M\setminus \{0\}$
is Levi-flat. Assume that there exist a generic codimension two submanifold $N\subset \bC^{n}$
and an analytic CR map $f\colon N \to M$ that is diffeomorphism
onto $M$. Then $N\setminus f^{-1}(S)$ is Levi-flat and so is $N$.
From Theorem~\ref{thm:lfsing} we obtain that $S$ is of dimension at least $n-2\geq 1$.
This is a contradiction.

The manifold $M$ is the intersection of two nonsingular Levi-flat hypersurfaces,
One defined by $\Im w = 0$, and
the other by
$\Re w = \Re (z_1^2 + \dots + z_{n-1}^2)$.  Further,
the manifold $M$ contains the singular complex analytic set
$\{ z_1^2 + z_2^2 + \dots + z_{n-1}^2 = 0 , w = 0 \}$ through the origin.
\end{example}


\section{Singular coordinates for nowhere minimal finite images}
\label{section:singcoord}

We would like to find at least a partial analogue for CR singular manifolds
of the following standard result for CR manifolds.  If $M$ is a CR
submanifold, and $p \in M$ then the CR orbit $\Orb_p$ is the germ of the
smallest CR submanifold of $M$ of the same CR dimension as $M$
through $p$.  For a real-analytic $M$, the CR orbit exists and is unique by
a theorem of Nagano (see \cite{BER:book}).  Near a generic point where the
orbit is of maximal possible dimension in $M$, the CR orbits give a
real-analytic foliation of $M$.  The following theorem gives a way
to describe this foliation.

\begin{thm}[see \cite{BER:craut}]
\label{nicenormcoord}
Let $M \subset \C^n$ be a
generic
real-analytic nowhere minimal
submanifold of real codimension $d$, and let $p \in M$.
Suppose that all the CR orbits are of real codimension $j$
in $M$.
Then there are local holomorphic coordinates
$(z,w^\prime,
w^{\prime\prime})\in\C^k\times\C^{d-j}\times\C^j=\C^n$,
vanishing at $p$, such that near $p$, $M$ is
defined by
\begin{align}
&\Im w^\prime=\varphi(z,\bar z,\Re
w^\prime,\Re w^{\prime\prime}) ,
\\&\Im w^{\prime\prime}=0,
\end{align}
where $\varphi$ is a real valued real-analytic function with
$\varphi(z,0,s^\prime, s^{\prime\prime})\equiv 0$.
Moreover,
the local CR orbit of the point
$(z,w^\prime,w^{\prime\prime})=(0,0,s'')$, for
$s'' \in \R^j$, is given by
\begin{align}
&\Im w^\prime=\varphi(z,\bar z,\Re w^\prime,s'') ,
\\&w^{\prime\prime}=s'' .
\end{align}
\end{thm}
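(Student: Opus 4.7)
The plan is in two stages: first produce the transverse $w''$ coordinates by straightening the CR orbit foliation via pluriharmonic functions, then apply parameterized normal coordinates within each orbit.

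\emph{Stage 1 (construction of $w''$).} By Nagano's theorem, nowhere minimality together with the hypothesis that orbits have constant real codimension $j$ yields a real-analytic foliation of $M$ of codimension $j$ near $p$ whose leaves are the local CR orbits. I would show that a real-valued real-analytic function $u$ on $M$ is constant along orbits if and only if it is annihilated by every section of $T^{1,0}M \oplus T^{0,1}M$; equivalently, both $u$ and $iu$ are CR functions on $M$. Since $M$ is generic and real-analytic, such a CR function extends holomorphically to a neighborhood of $M$, so $u$ is the restriction of a pluriharmonic function. Picking $j$ independent orbit-constant functions $u_1,\dots,u_j$ gives holomorphic $h_1,\dots,h_j$ with $u_i=\Re h_i$ whose differentials are linearly independent transverse to the orbit foliation; set $w''_i = h_i$. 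By construction $\Im w''|_M \equiv 0$ and each leaf is a level set $\{w''=s''\} \cap M$.

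\emph{Stage 2 (normal form within leaves).} For each fixed $s''$ the slice $\{w''=s''\}$ is a complex $(n-j)$-dimensional submanifold and its intersection with $M$ is a single CR orbit, which is itself a generic real-analytic CR submanifold of real codimension $d-j$. Apply the standard Baouendi--Ebenfelt--Rothschild normal coordinate theorem to this slice, parametrically in $s''$, to obtain coordinates $(z,w')$ in which the orbit reads $\Im w' = \varphi(z,\bar z,\Re w',s'')$ with $\varphi(z,0,s',s'') \equiv 0$. The real-analytic dependence on $s''$ comes for free because the normalization procedure is holomorphic in parameters. Completing $(z,w')$ with $w''$ gives the claimed coordinate system, and the displayed formula for the CR orbit of $(0,0,s'')$ follows immediately since fixing $w''=s''$ cuts out exactly one leaf.

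\emph{Main obstacle.} The nontrivial ingredient is Stage~1: that orbit-constant real-analytic functions on $M$ are restrictions of pluriharmonic functions on a neighborhood, or equivalently, that a nowhere minimal generic real-analytic submanifold admits a maximal supply of holomorphic first integrals for its orbit foliation. One proves this either by invoking the complexification $\mathcal{M} \subset \C^n \times \C^n$ of $M$ and analyzing the Segre foliation (whose quotient furnishes the holomorphic functions $h_i$), or by applying the CR extension theorem separately to $u$ and $iu$ and matching them up. The remaining work is essentially bookkeeping with Nagano's theorem and with the parameterized real-analytic normalization inside each leaf.
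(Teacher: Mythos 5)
The paper does not prove this theorem; it is quoted verbatim from the cited reference \cite{BER:craut} as a known result, so there is no internal proof to compare against. Your two-stage argument is essentially the standard one from that reference and is correct: orbit-constant real-analytic functions are CR, hence restrictions of holomorphic functions with imaginary part vanishing on $M$ (genericity gives the $\C$-linear independence of the resulting differentials $dh_1,\dots,dh_j$), and the fiberwise normal-coordinate normalization is holomorphic in the complexified parameter $w''$. One small slip: ``both $u$ and $iu$ are CR'' is vacuous since $u$ is CR iff $iu$ is; you mean that $u$ is annihilated by sections of both $T^{0,1}M$ and $T^{1,0}M$, which for real-valued $u$ follows automatically from the CR condition by conjugation.
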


For convenience, we will call a subvariety of codimension one a
\emph{hypervariety}.
For a real hypervariety $H \subset \C^n$ let $H^*$ denote the set of points near
which $H$ is a real-analytic nonsingular hypersurface.  We say
$H$ is a Levi-flat hypervariety if $H^*$ is Levi-flat.
The subvariety defined by
\begin{equation}
\Im w''_j = 0
\end{equation}
is a Levi-flat hypervariety (in this case, it is nonsingular).
We cannot
find coordinates as in Theorem~\ref{nicenormcoord}
for a CR singular manifold, but we can at least find
Levi-flat hypervarieties that play the role of
$\{ \Im w''_j = 0 \}$.  We should note that \emph{not} every Levi-flat hypervariety
is of the form $\Im h = 0$ for some holomorphic function $h$ (see
\cite{BG:lf}).

\begin{thm}
Let $N \subset \C^n$ be a real-analytic generic  connected submanifold
and
let $f \colon N \to \C^n$ be a real-analytic CR map
that is
a diffeomorphism onto its image, $M = f(N)$.
Suppose that $f$ extends to a finite holomorphic
map $F$ from a neighborhood of $N$ to a neighborhood of $M$.
Suppose that
all the CR orbits of $N$ are of real codimension $j$ in $N$,
and $p \in M$ is such that $M$ is CR singular at $p$.
Then there exists a neighborhood $U$ of $p$
and
$j$ distinct Levi-flat hypervarieties $H_1, H_2, \dots, H_j$
such that
$\dim_\R H_1 \cap \dots \cap H_j = 2n-j$ and
\begin{equation}
M \subset H_1 \cap \dots \cap H_j .
\end{equation}
Furthermore, if $\Orb_q$ is a germ of a CR orbit of $N$ at $q \in N$,
Then there exists an $n-j$ dimensional germ of a complex variety
$\bigl(L,f(q)\bigr)$ with $\bigl(L,f(q)\bigr) \subset \bigl(H_k,f(q)\bigr)$
for all $k=1,\dots,j$ and as germs
\begin{equation}
f(\Orb_q) \subset \bigl(L,f(q)\bigr) .
\end{equation}
\end{thm}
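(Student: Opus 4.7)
The plan is to transport the natural Levi-flat hypersurfaces from the normal form of $N$ across the finite map $F$. Fix $q_0 = f^{-1}(p)$ and apply Theorem~\ref{nicenormcoord} to obtain coordinates $\zeta = (z, w', w'')\in\C^{n-d}\times\C^{d-j}\times\C^{j}$ near $q_0$ in which $N$ is cut out by $\Im w' = \varphi(z,\bar z,\Re w',\Re w'')$ together with $\Im w'' = 0$, with CR orbits of $N$ given by $\{w''=s''\}\cap N$, $s''\in\R^j$. On the source side, introduce the Levi-flat real hyperplanes $R_k := \{\zeta : \Im w''_k(\zeta) = 0\}$ for $k=1,\ldots,j$; then $N\subset R_1\cap\cdots\cap R_j$, and each complex slice $L_{s''}:=\{w''=s''\}$, $s''\in\R^j$, lies in every $R_k$.

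The hypervariety $H_k$ will be the real-analytic closure of the image $F(R_k)$, described via the following Weierstrass polynomial. Since $F$ is finite of multiplicity $m=\mult_{q_0}(F)$, the holomorphic function $w''_k$ satisfies the monic relation
\begin{equation*}
P_k(W;Z) \;=\; \prod_{s=1}^{m}\bigl(W-w''_k(q_s(Z))\bigr) \;=\; W^{m}+c_{m-1,k}(Z)\,W^{m-1}+\cdots+c_{0,k}(Z),
\end{equation*}
where $c_{i,k}$ are holomorphic near $p$ and $q_1(Z),\ldots,q_m(Z)$ denote the (multivalued) branches of $F^{-1}$. Off the branch locus $B$ of $F$, each $h_{k,s}(Z):=w''_k(q_s(Z))$ is single-valued and holomorphic, so $F(R_k)\setminus B$ is a locally finite union of classical Levi-flat hypersurfaces $\{\Im h_{k,s}=0\}$; take $H_k$ to be the real-analytic closure of this union. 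Then $H_k$ is a real hypervariety of codimension one whose smooth locus is Levi-flat, with complex leaves $F(\{w''_k = c\})$ for $c\in\R$, and the inclusion $M=F(N)\subset F(R_k)\subset H_k$ is immediate from $N\subset R_k$.

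For the dimension count, $R_1\cap\cdots\cap R_j=\{\Im w''=0\}$ is a real submanifold of the source of real dimension $2n-j$, so $F(R_1\cap\cdots\cap R_j)\subset\bigcap_k H_k$ has real dimension $2n-j$. At a point of $M\setminus B$, which exists because $M$ is generic at some point and $B$ is nowhere dense, $F$ is a local biholomorphism and sends the transverse intersection of the $R_k$'s to a transverse intersection of the smooth Levi-flat hypersurfaces $H_1,\ldots,H_j$, pinning $\dim_\R\bigcap_k H_k=2n-j$ and simultaneously forcing the $H_k$ to be pairwise distinct (if two agreed, the intersection near such a regular point would have dimension $2n-j+1$). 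Finally, for a CR-orbit germ $\Orb_q$, the complex manifold $L_q:=\{\zeta:w''(\zeta)=w''(q)\}$ has complex dimension $n-j$, contains $\Orb_q$, and lies in every $R_k$ since $w''(q)\in\R^j$. The Remmert proper mapping theorem applied to the finite map $F|_{L_q}$ produces the complex subvariety $L:=F(L_q)$ of complex dimension $n-j$, which contains $f(\Orb_q)$ and is contained in $F(R_k)\subset H_k$ for every $k$.

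The principal technical obstacle is verifying that $H_k$ is a genuine real-analytic hypervariety (rather than merely semi-analytic) all the way across the branch locus $B$, together with the coherent extension of its Levi-flat foliation across $B$; this should follow from the explicit polynomial $P_k$ combined with a closure argument in the spirit of Lemma~\ref{DF:lemma} and Corollary~\ref{cor:localcplxisglobalcplx}, which promote pointwise complex-analytic data into honest complex subvarieties.
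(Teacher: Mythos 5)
Your overall strategy --- put $N$ in the normal form of Theorem~\ref{nicenormcoord} and push the hyperplanes $\{\Im w''_k=0\}$ forward through the finite map --- is the same as the paper's, and your treatment of the orbit statement (push $\{w''=w''(q)\}$ forward by Remmert) matches the paper exactly. But there are two genuine gaps in the hypersurface part. First, the construction of $H_k$ as a real-analytic hypervariety is not actually carried out, as you yourself flag. The difficulty is real: the natural defining function for $\overline{F(R_k)}$ off the branch locus is the ``diagonal'' product $\prod_s \Im h_{k,s}(Z)$, which pairs each branch $h_{k,s}$ with its own conjugate; this is invariant under permuting the branches but does \emph{not} complexify to a single-valued holomorphic object when $Z$ and $\bar Z$ are polarized independently, so it is not clear it extends real-analytically across the branch locus, and Lemma~\ref{DF:lemma} / Corollary~\ref{cor:localcplxisglobalcplx} (which concern complex subvarieties of real-analytic sets) are not the right tools to repair this. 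The paper's device is to complexify the map itself, $\sF(\zeta,\xi)=\bigl(F(\zeta),\bar F(\xi)\bigr)$ on $V\times V^*$, apply Remmert to the complexification of $\{\Im\langle w'',v\rangle=0\}$, and restrict back to the diagonal $\{\overline{\zeta'}=\xi'\}$; this produces an honest real-analytic hypervariety $H$ (possibly strictly larger than $\overline{F(R_k)}$, which is harmless since only $M\subset H$ is needed), and Levi-flatness of all of $H$ is then obtained from the Burns--Gong lemma, not from a closure argument.

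Second, and more seriously, your dimension count for $\bigcap_k H_k$ does not go through. Each $H_k$ contains $F(R_k)$, and because $F^{-1}$ is multivalued, a point can lie in $F(R_1)\cap\dots\cap F(R_j)$ because \emph{different} preimages of it lie on different $R_k$'s; such points need not lie in $F(R_1\cap\dots\cap R_j)$, and the corresponding sheets can produce shared or tangent components of the $H_k$'s of codimension less than $j$ (they need not pass through $M$ at all, and can accumulate at $p$). Your transversality argument at a single point of $M\setminus B$ only controls the one component of each $H_k$ coming from the sheet containing $N$, so it pins down a codimension-$j$ component but does not exclude larger ones elsewhere in $U$. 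This is exactly why the paper does not use the coordinate hyperplanes $\{\Im w''_k=0\}$ directly but instead chooses linear combinations $\{\Im\langle w'',v_k\rangle=0\}$ inductively: at each step it analyzes the components of $\sF^{-1}\bigl(\sF(\sH_1)\cap\dots\cap\sF(\sH_k)\bigr)$ and picks the next $v$ so that the new $\sH$ misses the extraneous fibers $\sF^{-1}(\sF(x))$, forcing the codimension to increase by one. Without some such genericity argument the claim $\dim_\R H_1\cap\dots\cap H_j=2n-j$ is unproven.
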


In particular, if $N$ is Levi-flat then we can find Levi-flat hypervarieties
$H_1,\dots,H_j$ such that $M$ is one of the components of
$H_1 \cap \dots \cap H_j$.

\begin{proof}
Suppose that $q \in N$ is such that $F(q) = p$.
We will from now on assume $F \colon V \to U$ is a representative of the germ
for some connected open subsets
$V, U \subset \C^n$, $q \in V$,
$p=F(q) \in U$, and $F(V) = U$.  As the germ of $F$ at $q$ is finite we
will assume that the representative $F$ is proper map from $V$ onto $U$.
Let us also assume that $N$
is closed in $V$ and furthermore that
the $N$ is defined in $V$ using coordinates of
Theorem~\ref{nicenormcoord} (the coordinates are defined in all of $V$).

Fix a nonzero vector $v \in \R^j$.
Take the variety $\{ \Im \langle w'', v \rangle = 0 \}$ and let us
push it forward by $F$.
The image need not necessarily be a real-analytic subvariety.
We claim, however, that as $F$ is finite, then
$F(\{ \Im \langle w'', v \rangle = 0 \})$ is contained in a real-analytic
subvariety of codimension one.  To show the claim we complexify $F$ and
$\{ \Im \langle w'', v \rangle = 0 \}$, push the set forward using
the Remmert proper map theorem, and then restrict back to the diagonal.

As $F$ is proper, then
the function $\sF(\zeta,\xi) = \bigl(F(\zeta),\bar{F}(\xi)\bigr)$ is
a proper map of $V \times V^*$ to $U \times U^*$
where $V^* = \{ \xi : \bar{\xi} \in V \}$.  So as
$\{ \Im \langle w'', v \rangle = 0 \}$ complexifies to a complex submanifold $\sH \subset
V \times V^*$, then as $\sF$ is proper, $\sF(\sH)$ is an irreducible
complex subvariety
of $U \times U^*$.
Let $(\zeta',\xi')$ denote the coordinates in $U \times U^*$ and
$\pi_{\zeta'}$ the projection onto the $\zeta'$ coordinates.
Let $H$ denote the set $\pi_{\zeta'} \bigl( \sF(\sH) \cap
\{ \overline{\zeta'} = \xi' \} \bigr)$.
The defining equation for $\sF(\sH)$ defines $H$ once we plug in
$\overline{\zeta'}$ for $\xi'$.
Therefore $H$ is an irreducible real subvariety of $U$ and $M \subset H$.
A holomorphic function that is not identically zero cannot vanish
identically on the maximally totally real set
$\{ \overline{\zeta'} = \xi' \}$, and hence $H$ must be a proper subvariety
of $U$.
By construction,
$F(\{ \Im \langle w'', v \rangle = 0 \}) \subset H$.  Therefore,
the subvariety $H$ must be of real codimension one as $F$ is finite.

Since
$\{ \Im \langle w'', v \rangle = 0 \}$ is a real Levi-flat
hypersurface and $F$ is a local
biholomorphism outside of a complex subvariety, we see that $H$ must
be Levi-flat at some point.  By a lemma of Burns and Gong (see \cite{BG:lf} or
\cite{Lebl:lfnm}) then as $H$ is irreducible, it is Levi-flat at all
smooth points of top dimension, and hence Levi-flat by definition.

Suppose we have taken $k$ linearly independent vectors $v_1,\dots,v_k$ such that
the corresponding $H_1,\dots,H_k$ have an intersection that is of real
codimension $k$.  Suppose that $k < j$. 
We have
\begin{equation}
H_1 \cap \dots \cap H_k =
\pi_{\zeta'} \bigl( \sF(\sH_1) \cap \dots \cap \sF(\sH_k)
\cap \{ \overline{\zeta'} = \xi' \} \bigr) .
\end{equation}
Let
\begin{equation}
\sV =
\sF^{-1} \bigl( \sF(\sH_1) \cap \dots \cap \sF(\sH_k) \bigr) .
\end{equation}
The variety $\sV$ has codimension $k$.
Let us treat
$(z,w',w'')$ and $(\bar{z},\bar{w}',\bar{w}'')$ as different variables.

If $\sV = \sH_1 \cap \dots \cap \sH_k$, then pick any vector
vector $v \in \R^j$ linearly independent from $v_1,\dots,v_k$,
and let $\sH$ be defined by
$\{ \langle w'',v\rangle - \overline{\langle w'',v\rangle} = 0\}$.
Then the intersection $\sV \cap \sF^{-1}\bigl(\sF(\sH)\bigr)$ is of
codimension $k+1$.
It now follows that
$\sF(\sH_1) \cap \dots \cap \sF(\sH_k) \cap \sF(\sH)$ is of codimension $k+1$.
And hence if $H = \pi_{\zeta'} \bigl( \sF(\sH) \cap
\{ \overline{\zeta'} = \xi' \} \bigr)$, then
$H_1 \cap \dots \cap H_k \cap H$ has real codimension $k+1$.  This claim
follows because if $H_1 \cap \dots \cap H_k \cap H$ has codimension $k$,
it would have some point where it is a smooth real codimension $k$ manifold.
The complexification at that point would have to be a complex codimension $k$
manifold, and we know that is not true.

In case $\sV$ has other components, then
let $\sC$ be any irreducible component of $\sV$ that is not contained
in $\sH_1 \cap \dots \cap \sH_k$.
Let us treat
$(z,w',w'')$ and $(\bar{z},\bar{w}',\bar{w}'')$ as different variables
as usual.  Note that $\sX = \sF^{-1}\bigl( \sF(\{ w'' = \bar{w}''\}) \bigr)$
is of dimension $2n-j$ as $\sF$ is finite.  Furthermore $\sX \subset \sV$.
There must exist a point
$x = (z_0,w_0',w_0'',\bar{z}_0,\bar{w}_0',\bar{w}_0'') \in \sC$ where
$x \notin \sX$.
Hence $\sF^{-1}\bigl( \sF(x) \bigr) \notin \sX$.
In particular, $\sF^{-1}\bigl( \sF(x) \bigr) \cap \{ w'' = \bar{w}'' \}$
is the empty set.
We can pick a vector $v \in \R^j$
linearly independent from $v_1,\dots,v_k$,
such that the set $\sH$ defined by
$\{ \langle w'',v\rangle - \overline{\langle w'',v\rangle} = 0\}$ does not
contain any point of $\sF^{-1}\bigl( \sF(x) \bigr)$
and so the intersection $\sV \cap \sF^{-1}\bigl(\sF(\sH)\bigr)$ is of
codimension $k+1$.  We then proceed as above.

Hence we can find
$j$ distinct Levi-flat hypervarieties $H_1, H_2, \dots, H_j$
such that
$\dim_\R (H_1 \cap \dots \cap H_j) = 2n-j$ and
$M \subset H_1 \cap \dots \cap H_j$.

To find $L$, we push forward the
complex variety $\{ w'' = s'' \}$ by $F$, which is finite.
\end{proof}


\begin{remark}
Theorem~\ref{nicenormcoord} also generalizes to some extent to certain CR
manifolds at points where the dimension of the CR orbits is not constant and
hence where the CR orbits do not form a foliation.  At such singular points
it is not always true that such $N$ lie inside Levi-flat hypervarieties,
despite all CR orbits being of positive codimension.  See \cite{Lebl:lfnm}
for more on these matters.
\end{remark}


\section{Failure of extensions of real-analytic CR functions} \label{section:noext}
In this section, we focus our attention on functions satisfying the
pointwise Cauchy-Riemann conditions on a CR singular
image with a nonempty CR singular set $S$.
We shall show for each
$p\in S$, there exists a real-analytic function on a neighborhood of $p$
in $M$ satisfying all the pointwise Cauchy-Riemann conditions that does not
extend to a holomorphic function at $p$.  This result
generalizes Lemma~\ref{lemma:nonextendible}.

\begin{thm}
Let $M\subset \bC^n$ be a connected real-analytic CR singular submanifold such that there are a
real-analytic generic submanifold $N \subset \C^n$ and a real-analytic CR map $f \colon N \to \C^n$
that is a diffeomorphism onto $M = f(N)$. Suppose $S$ is the CR singular set of $M$ and $M\setminus S$
is generic.
Then, for any $p\in S$, there exist a neighborhood $U$ of $p$ and a real-analytic  function $u$ on $U\cap M$ such that 
$Lu|_q =0$ for any $q\in U\cap M$ and any $L\in T^{(0,1)}_qM$,  but $u$ does not
extend to a holomorphic function on any neighborhood of $p$
in $\C^n$.
\end{thm}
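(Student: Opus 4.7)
The plan is to strengthen the argument of Lemma~\ref{lemma:nonextendible}, which already produces a real-analytic function on $U\cap M$ that is CR on $U\cap M\setminus S$ and does not extend past $p$, so that the function produced additionally satisfies the pointwise CR condition at every point of $S$. Let $q=f^{-1}(p)$ and let $F$ be the holomorphic extension of $f$ to a neighborhood of $N$; since $p\in S$, Lemma~\ref{lemma:singjacob} ensures that $F_*|_q$ has nontrivial kernel on $T^{(1,0)}_q\C^n$. Every real-analytic $u$ on $U\cap M$ that is CR on $U\cap M\setminus S$ arises as $u=h\circ f^{-1}$ for a unique germ $h\in\mathcal{O}_q$, obtained by holomorphically extending $u\circ f$ from the generic submanifold $N$, and conversely every $h\in\mathcal{O}_q$ yields such a $u$.

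I would then translate the two conditions of interest into conditions on $h$. For $L\in T^{(0,1)}_{p'}M$ with $p'=f(q')$, write $L=f_*(L')$ with $L'\in\C T_{q'}N$ and decompose $L'=L'_{1,0}+L'_{0,1}$ in $\C T_{q'}\C^n$. Since $F_*$ preserves the $(1,0)/(0,1)$ splitting, the condition $L\in T^{(0,1)}$ forces $F_*(L'_{1,0})=0$, and the chain rule (using $dh=\partial h$) gives $Lu=\partial h|_{q'}(L'_{1,0})$. The genericity of $N$ is crucial here: it forces the $(1,0)$-projection $\pi^{(1,0)}\colon\C T_{q'}N\to T^{(1,0)}_{q'}\C^n$ to be surjective (a direct consequence of $T_{q'}N+JT_{q'}N=T_{q'}\C^n$), so as $L$ ranges over $T^{(0,1)}_{p'}M$ the vector $L'_{1,0}$ realizes every element of $\ker F_*|_{T^{(1,0)}_{q'}\C^n}$. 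Hence $u$ satisfies the pointwise CR condition at every point of $M\cap U$ if and only if $\partial h|_{q'}$ annihilates $\ker F_*|_{T^{(1,0)}_{q'}\C^n}$ for every $q'$ near $q$. Repeating the reasoning of Lemma~\ref{lemma:nonextendible}, $u$ extends holomorphically past $p$ if and only if $h\in F^*(\mathcal{O}_p)$. Let $\mathcal{A}'\subseteq\mathcal{O}_q$ be the subspace defined by the first condition; since $\partial(g\circ F)=F^*(\partial g)$ tautologically vanishes on $\ker F_*$, one has $F^*(\mathcal{O}_p)\subseteq\mathcal{A}'$.

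The theorem thus reduces to the strict inclusion $\mathcal{A}'\supsetneq F^*(\mathcal{O}_p)$, which I expect to be the main obstacle. I would establish it by choosing local holomorphic coordinates near $q$ in which $F(\zeta',\zeta'')=(\zeta',\phi(\zeta',\zeta''))$, where $\zeta'\in\C^r$, $\zeta''\in\C^{n-r}$, $r=\rank F_*|_q<n$, and $\phi$ vanishes to order at least two at $q$. In these coordinates $\ker F_*|_\zeta=\{0\}\times\ker\phi_{\zeta''}(\zeta)$, so $\mathcal{A}'$ is cut out by the first-order algebraic condition that the row vector $\partial_{\zeta''}h(\zeta)$ lie in the row span of $\phi_{\zeta''}(\zeta)$ for each $\zeta$, while membership in $F^*(\mathcal{O}_p)$ requires the full functional identity $h(\zeta)=g(\zeta',\phi(\zeta',\zeta''))$. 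In the model case $r=n-1$, $\mathcal{A}'$ is exactly the set of $h\in\mathcal{O}_q$ with $\partial h/\partial\zeta_n=A(\zeta)\cdot(\partial\phi/\partial\zeta_n)$ for an arbitrary $A\in\mathcal{O}_q$, whereas $h\in F^*(\mathcal{O}_p)$ additionally forces $A\in F^*(\mathcal{O}_p)$; since $F^*(\mathcal{O}_p)\subsetneq\mathcal{O}_q$ by the degenerate-rank argument already used in Lemma~\ref{lemma:nonextendible}, this exhibits $\mathcal{A}'\supsetneq F^*(\mathcal{O}_p)$. The general case $r<n$ follows by the same Taylor-coefficient analysis of the two sides. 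Any $h\in\mathcal{A}'\setminus F^*(\mathcal{O}_p)$ then produces the desired real-analytic pointwise CR function $u=h\circ f^{-1}$ on $U\cap M$ that fails to extend holomorphically past $p$.
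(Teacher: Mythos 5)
Your reduction is sound and is a genuinely different route from the paper's: translating the pointwise CR condition into ``$\partial h|_{q'}$ annihilates $\ker F_*|_{T^{1,0}}$'' and non-extendability into ``$h \notin F^*(\sO_p)$'' is correct (the genericity of $N$ does give surjectivity of the $(1,0)$-projection, and agreement on the generic set $M\setminus S$ does upgrade $\hat u \circ F = h$ to an identity of germs). Your corank-one case is also complete: integrating $A\,\phi_{\zeta_n}$ in the single variable $\zeta_n$ produces $h \in \sA'$, and $h = g\circ F$ would force $A = (\partial g/\partial w)\circ F \in F^*(\sO_p)$, so any $A \in \sO_q \setminus F^*(\sO_p)$ (which exists by the degenerate-rank argument) finishes that case.

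The gap is the sentence ``the general case $r<n$ follows by the same Taylor-coefficient analysis.'' Higher corank genuinely occurs under the hypotheses --- e.g.\ $F(\zeta) = (\zeta_1 + i\zeta_3,\ \zeta_2 + i\zeta_4,\ \zeta_3^2,\ \zeta_4^2)$ restricted to $\R^4 \subset \C^4$ is a diffeomorphism onto a CR singular image with $\rank dF|_0 = 2$ --- and there the one-variable integration has no analogue. If you try to solve $\partial_{\zeta''} h = \sum_k A_k\, \partial_{\zeta''}\phi_k$ as an identity, the system is overdetermined and its integrability constraints push you toward $A_k = B_k \circ F$, i.e.\ toward $h \in F^*(\sO_p)$; if instead you only impose the pointwise row-span condition on the degeneracy locus, the robust candidates are things like $h = J_F^2\, A$ (whose differential vanishes wherever $J_F = 0$), but then you must rule out that \emph{every} such $h$ lies in $F^*(\sO_p)$, and that is a nontrivial algebraic statement, not a routine Taylor-coefficient check. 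This is precisely where the paper does something different: it takes $\varphi = J_F^2 \circ f^{-1}$ (automatically pointwise CR in every corank, since $L\varphi = 2\theta\,(X\theta)$ vanishes both on $S$ and off $S$), and if $\varphi$ happens to extend it considers the pair $u\varphi$ and $u^2\varphi$ with $u$ the non-extendable function of Lemma~\ref{lemma:nonextendible}; if both extended to $v_1, v_2$, the identity $v_1^2 = v_2\varphi$ in the UFD $\sO_p$ (with $\varphi$ replaced by a radical representative) would force $u = v_2/v_1$ to extend, a contradiction. You need either to import that dichotomy-plus-factorization argument to close your general case, or to supply an actual construction of $h \in \sA' \setminus F^*(\sO_p)$ valid for arbitrary corank; as written, the claim is asserted rather than proved.
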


\begin{proof}
Let $F$ be the unique holomorphic extension of $f$ to a neighborhood $V$ of
$f^{-1}(p)$ in $\bC^n$. Let $\theta = J_F$ and $\varphi= \theta^2 \circ
f^{-1}$. Then $\varphi$ is a real-analytic function on $f(V\cap N)$. We
claim that $\varphi$ satisfies all CR conditions on $M$. Indeed, for any point $q\in M$ near $p$, let $L\in T^{(0,1)}_qM$. If $q\in S$, then $q':=f^{-1}(q) \in f^{-1}(S) \subset \{\theta=0\}$ and hence $\theta(q')=0$. If $q\in M\setminus S$, then $X_{q'}:=(f^{-1})_*L\in T^{(0,1)}_{q'}N$
 and thus $X_{q'}(\theta)(q') = 0$. Therefore for all $q \in U\cap M$, we have
\begin{equation}
(L\varphi)(q) = L(\theta^2\circ f^{-1})(q) =
((f^{-1})_*L)\theta^2\bigl(f^{-1}(q)\bigr) =
X_{q'}\theta^2(q')=
2\theta(q')\,\bigl(X_{q'}\theta\bigr)(q') = 0.
\end{equation}
Therefore, the claim follows.

If $\varphi$ does not extend to a holomorphic function in a neighborhood of $p$ in $\bC^n$, then we are done.
Otherwise, suppose that $\varphi$ extends to a neighborhood. Notice that $\varphi \equiv 0$ on $S$. Without loss of generality we can assume further that $\varphi$ is radical.

On the other hand, by Lemma~\ref{lemma:nonextendible}, we can find a real-analytic function $u$
on a neighborhood $U$ of $p$ in $M$ such that $u$ is CR on $U\setminus S$ and $u$ does not extend holomorphically
near $p$.  By construction,  $u\varphi$ and $u^2\varphi$ restricted to $U$ are both CR functions on $M\cap U$. We claim that at least one of the two functions $u\varphi$ and
$u^2\varphi$ does not extend holomorphically past $p$. Indeed, assume for a contradiction that $v_1$
and $v_2$ are holomorphic functions on a neighborhood of $p$ in $\C^n$ whose
restrictions to $M$ are $u\varphi$ and $u^2\varphi$, respectively. Observe
that the following equalities hold on $U$.
\begin{equation}\label{vovanqua}
v_1^2 = u^2\varphi^2 = v_2\varphi.
\end{equation}
Since $M$ is generic at all points on $M\setminus S$, $v_1^2$ and $v_2\varphi$ are
holomorphic, we deduce from \eqref{vovanqua} that $v_1^2 = v_2\varphi$ in a neighborhood of $p$ in $\bC^n$.
In other words, we have the following equality in the ring $\sO_p$.
\begin{equation}\label{hanh}
 v_1^2 = v_2\varphi.
\end{equation}
Note that the ring $\sO_p$ is a unique factorization domain and $\varphi$ is
radical. From \eqref{hanh} we obtain that $v_1$ divides $v_2$ and hence $\frac{v_2}{v_1}$ is holomorphic near $p$.
Consequently, $u$ extends to the holomorphic function $\frac{v_2}{v_1}$ on a neighborhood of $p$.
We obtain a contradiction. 
\end{proof}


\section{Examples} \label{section:examples}

We start this section by the following proposition, which is helpful in
constructing examples.

\begin{prop} \label{prop:lf3simp}
Let $w = \rho(z,\bar{z})$ define a connected CR singular manifold $M$ near the origin
in coordinates $(z,w) \in \C^2 \times \C$, where $\rho$ is real-analytic and
such that $\rho=0$ and $d\rho = 0$ at the origin.
If $\rho_{\bar{z}_1} \equiv 0$, then
$M$ is Levi-flat at CR points,
and furthermore,
the set $S$ of CR singularities is given by $M \cap \{ (z,w) : \rho_{\bar{z}_2}(z) = 0 \}$.

Furthermore, for each point $p \in M$, there exists a neighborhood $U$ such
that $U \cap M$ is the image under a real-analytic CR diffeomorphism of an
open subset of $\R^2 \times \C$.
\end{prop}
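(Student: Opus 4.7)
The plan is to prove the three assertions in the order (ii)--(iii)--(i), with the explicit parametrization in (iii) doing most of the work for (i). For the identification of $S$, I would simply invoke Proposition~\ref{prop:singcomp} with $n=3$: the CR singular set is cut out in $M$ by the simultaneous vanishing of $\rho_{\bar z_1}$ and $\rho_{\bar z_2}$, and since the hypothesis $\rho_{\bar z_1}\equiv 0$ makes the first equation automatic, only $\rho_{\bar z_2}=0$ remains.

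Second, for the CR diffeomorphism, the condition $\rho_{\bar z_1}\equiv 0$ lets me write $\rho(z,\bar z)=\sigma(z_1,z_2,\bar z_2)$ for a holomorphic three-variable germ $\sigma$. Let $N\subset\C^3$ be the generic CR submanifold $\R^2\times\C$ (embedded so that the first two coordinates are the real parts of the first two $\C$-factors of $\C^3$), with parameters $(s_1,s_2,\zeta)$, and define
\[
f(s_1,s_2,\zeta)=\bigl(\zeta,\ s_1+is_2,\ \sigma(\zeta,\,s_1+is_2,\,s_1-is_2)\bigr).
\]
This is visibly a real-analytic bijection from a neighborhood of $0$ in $N$ onto a neighborhood of $p$ in $M$, with smooth inverse $(z_1,z_2,w)\mapsto(\Re z_2,\Im z_2,z_1)$. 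The CR condition reduces to checking $\partial f/\partial\bar\zeta=0$, which holds because $\sigma$ is holomorphic in its first slot and $s_1\pm is_2$ carry no $\bar\zeta$ dependence.

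Finally, to establish Levi-flatness at CR points I would use this parametrization to exhibit an explicit foliation of $M$ by complex curves: for each fixed $(s_1,s_2)$ the slice $f(\{(s_1,s_2)\}\times\C)$ equals $\{z_2=s_1+is_2\}\cap M$, which is the graph over $\zeta$ of a holomorphic function, hence a one-dimensional complex submanifold. It remains to check that at any CR point $p\in M$ the antiholomorphic tangent to the leaf through $p$ coincides with $T^{0,1}_p M$. Direct computation: a section $L=a_1\partial_{\bar z_1}+a_2\partial_{\bar z_2}+b\partial_{\bar w}$ of $\C\otimes TM$ lies in $T^{0,1}M$ iff $L(w-\rho)=L(\bar w-\bar\rho)=0$, which under $\rho_{\bar z_1}\equiv 0$ becomes $a_2\rho_{\bar z_2}=0$ together with $b=a_1\bar\rho_{\bar z_1}+a_2\bar\rho_{\bar z_2}$. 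At a CR point, $\rho_{\bar z_2}\neq 0$ by step (ii), forcing $a_2=0$ and $L=a_1(\partial_{\bar z_1}+\bar\rho_{\bar z_1}\partial_{\bar w})$, which is precisely the antiholomorphic tangent to the slice $\{z_2=\mathrm{const}\}\cap M$. Thus the slice foliation is the Levi foliation wherever $M$ is CR, and $M$ is Levi-flat at CR points.

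The main obstacle is essentially cosmetic and lies in arranging the parametrization so that the CR tangent to a slice matches the CR tangent of $M$ at CR points. Once the hypothesis $\rho_{\bar z_1}\equiv 0$ is used to kill the $\bar z_1$-component of any CR vector field of $M$ (at a CR point), everything collapses to a one-line identification of tangent spaces; no normal-form machinery or further invocation of Theorem~\ref{thm:generalsingthm} is required.
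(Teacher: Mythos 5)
Your proposal is correct and follows essentially the same route as the paper: the CR singular set is identified by specializing Proposition~\ref{prop:singcomp}, the parametrization $(s_1,s_2,\zeta)\mapsto(\zeta,\,s_1+is_2,\,\rho(\zeta,s_1+is_2,\cdot,s_1-is_2))$ is exactly the paper's map, and Levi-flatness comes from the foliation by the complex slices $\{z_2=\mathrm{const}\}\cap M$. Your explicit check that $T^{0,1}_pM$ coincides with the antiholomorphic tangent of the leaf at CR points is a detail the paper leaves to the reader, and is a welcome addition rather than a departure.
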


Note that if $w = \rho(z,\bar{z})$ and $\rho_{\bar{z}_1} \equiv 0$,
we could also get that $M$ is a complex manifold,
but in this case $M$ is not CR singular.

\begin{proof}
If $\rho_{\bar z_2} \equiv 0$ then $\rho$ is holomorphic and hence $M$ is
complex analytic. Otherwise, $\rho_{\bar z_2} \not\equiv 0$ and therefore,
from Proposition~\ref{prop:singcomp}, we see that
\begin{equation}
S = M\cap \{(z,w): \rho_{\bar z_2}(z,\bar z) = 0\}.
\end{equation}
Hence, $S\subset M$ is a proper real subvarieties and $M\setminus S$ is generic.
To see that $M\setminus S$ is Levi-flat, observe that in a neighborhood of $p\not\in S$, $M\setminus S$ is foliated by family of one-dimensional complex submanifolds defined by $L_{t} = \{(z,w): w = \rho(z_1,t,0, \bar t)\}$ with complex parameter $t$.
Let a local map $f \colon \R^2\times \C \to M$ be given by
\begin{equation}
f \colon (x,y,\xi) \mapsto \bigl(\xi, x+iy,\rho(\xi, x+iy, 0 , x-iy)\bigr).
\end{equation}
The CR structure on $\R^2\times \C$ is given by $\partial
/\partial \bar \xi$ and so clearly $f$ is CR map. Since $\rho$ does not
depend on $\bar z_1$, it follows that $f$ sends $\R^2\times \C$ into $M$.
The fact that $f$ is local diffeomorphism is immediate.
\end{proof}

Using the proposition we can easily create many examples showing that
the CR singular set of a Levi-flat manifold that is an image of a
CR diffeomorphisms can have any possible CR structure allowed
by Corollary~\ref{cor:lfcor}.

\begin{example}
We can obtain a 3-dimensional CR singularity by simply taking a parabolic
CR singular Bishop surface in 2 dimensions and considering it in 3
dimensions.  For example,
\begin{equation}
w = \abs{z_2}^2 + \frac{\bar{z}_2^2}{2} .
\end{equation}
The manifold is the image of $\R^2 \times \C$ by the construction of
Proposition~\ref{prop:lf3simp}.
The CR singular set is the set $\{ \Re z_2 = 0 \} \cap M$, hence 3
real dimensional.

The submanifold is contained in the nonsingular Levi-flat hypersurface
defined by
$\Im w = - \Im \frac{z_2^2}{2}$.
%
\end{example}

\begin{example}
Next, let us consider
\begin{equation}
w = z_1 \bar{z}_2^2 .
\end{equation}
The manifold is the image of $\R^2 \times \C$ by the construction of
Proposition~\ref{prop:lf3simp}.  The CR singular
set is the set $( \{ z_1 = 0 \} \cup \{ z_2 = 0 \} ) \cap M$,
that is a union of two 2-dimensional sets, both of which are
complex analytic.  Note that the set $\{ z_1 = 0 \} \cap M$ is
a complex analytic set that is an image of a totally real
submanifold of $\R^2 \times \C$ under the map of
Proposition~\ref{prop:lf3simp}.  We therefore have a complex analytic
set that is a subset of $M$ while not being an image of
one of the leaves of the Levi-foliation of $\R^2 \times \C$.
%
%
\end{example}

\begin{example}
Consider
\begin{equation}
w = z_1 \bar{z}_2 - \frac{\bar{z}_2^2}{2} .
\end{equation}
Again the manifold is the image of $\R^2 \times \C$.  The CR singular
set $S$ is the set $\{ z_1 = \bar{z}_2 \} \cap M$,
which is a totally real set; to see this fact
simply substitute $\bar{z}_2 = z_1$ in the defining equation for $M$ to
find that $S$ is the intersection of
$\{ z_1 = \bar{z}_2 \}$ with a complex manifold.
\end{example}

\begin{example}
Consider
\begin{equation}
w = z_1 \bar{z}_2 - \frac{z_2 \bar{z}_2^2}{2} .
\end{equation}
The CR singular set $S$ is the set $\{ z_1 = \abs{z_2}^2 \} \cap M$,
which is a CR singular submanifold.
\end{example}

\begin{example} \label{example:ER}
While we have mostly concerned ourselves with flat manifolds, there is
nothing particularly special about flat manifolds.  Even a finite-type
manifold can map to a CR singular manifold.  The following
example was given in
\cite{ER}*{Example 1.6}.
Let $M \subset \C^3$ be given by
\begin{equation}
M = \left\{(z, w_1 , w_2 ) \in \C^3 : \Im w_1 = \frac{{\abs{z}}^2}{2},
\Im w_2 = \frac{{\abs{z}}^4}{2} \right\} .
\end{equation}
$M$ is taken to the CR singular manifold
\begin{equation}
\{(z_1, z_2, w) \in \C^3 : w = {(\bar{z}_2+i{\abs{z_1}}^2+{\abs{z_1}}^4 )}^2 \}
\end{equation}
via the finite holomorphic map
\begin{equation}
(z, w_1, w_2) \mapsto
\bigl(z, w_1 + iw_2 , {(w_1 - iw_2 )}^2 \bigr) .
\end{equation}
The map is a diffeomorphism onto its image when restricted to $M$.
\end{example}

%
%
%
%


\def\MR#1{\relax\ifhmode\unskip\spacefactor3000 \space\fi%
  \href{http://www.ams.org/mathscinet-getitem?mr=#1}{MR#1}}

\begin{bibdiv}
\begin{biblist}

\bib{AGV85}{book}{
   author={Arnol{\cprime}d, V. I.},
   author={Guse{\u\i}n-Zade, S. M.},
   author={Varchenko, A. N.},
   title={Singularities of differentiable maps. Vol. II},
   series={Monographs in Mathematics},
   volume={83},
   note={Monodromy and asymptotics of integrals;
   Translated from the Russian by Hugh Porteous;
   Translation revised by the authors and James Montaldi},
   publisher={Birkh\"auser Boston Inc.},
   place={Boston, MA},
   date={1988},
   pages={viii+492},
   isbn={0-8176-3185-2},
   review={\MR{966191}},
}

\bib{BER:craut}{article}{
      author={Baouendi, M.~Salah},
      author={Ebenfelt, Peter},
      author={Rothschild, Linda~Preiss},
     title = {C{R} automorphisms of real analytic manifolds in complex
              space},
   journal = {Comm.\ Anal.\ Geom.},
    volume = {6},
      year = {1998},
    number = {2},
     pages = {291--315},
  review = {\MR{1651418}},
   note = {\href{http://www.arxiv.org/abs/math/9603201}{arXiv:math/9603201}}
}

\bib{BER:book}{book}{
      author={Baouendi, M.~Salah},
      author={Ebenfelt, Peter},
      author={Rothschild, Linda~Preiss},
       title={Real submanifolds in complex space and their mappings},
      series={Princeton Mathematical Series},
   publisher={Princeton University Press},
     address={Princeton, NJ},
        date={1999},
      volume={47},
        ISBN={0-691-00498-6},
      review={\MR{1668103}},
}

\bib{Bishop65}{article}{
   author={Bishop, Errett},
   title={Differentiable manifolds in complex Euclidean space},
   journal={Duke Math.\ J.},
   volume={32},
   date={1965},
   pages={1--21},
   issn={0012-7094},
   review={\MR{0200476}},
}


\bib{BG:lf}{article}{
   author={Burns, Daniel},
   author={Gong, Xianghong},
   title={Singular Levi-flat real analytic hypersurfaces},
   journal={Amer.\ J.\ Math.},
   volume={121},
   date={1999},
   number={1},
   pages={23--53},
   issn={0002-9327},
   review={\MR{1704996}},
}

\bib{DF:realbnd}{article}{
   author={Diederich, Klas},
   author={Forn\ae ss, John E.},
   title={Pseudoconvex domains with real-analytic boundary},
   journal={Ann.\ Math.\ (2)},
   volume={107},
   date={1978},
   number={2},
   pages={371--384},
   review={\MR{0477153}},
}


\bib{ER}{article}{
   author={Ebenfelt,~Peter},
   author={Rothschild,~Linda P.},
   title={Images of real submanifolds under finite holomorphic mappings},
   journal={Comm.\ Anal.\ Geom.},
   volume={15},
   date={2007},
   number={3},
   pages={491--507},
   issn={1019-8385},
   review={\MR{2379802}},
}


\bib{Gong04}{article}{
   author={Gong, Xianghong},
   title={Existence of real analytic surfaces with hyperbolic complex
   tangent that are formally but not holomorphically equivalent to quadrics},
   journal={Indiana Univ.\ Math.\ J.},
   volume={53},
   date={2004},
   number={1},
   pages={83--95},
   issn={0022-2518},
   review={\MR{2048184}},
   doi={10.1512/iumj.2004.53.2386},
}

\bib{H}{article}{
      author={Harris, Gary~Alvin},
      title={The traces of holomorphic functions on real submanifolds},
      journal={Trans.\ Amer.\ Math.\ Soc.},
      volume={242},
      pages={205--223},
      year={1978},
      review={\MR{0477120}},
}

\bib{Harris85}{article}{
   author={Harris, Gary~Alvin},
   title={Lowest order invariants for real-analytic surfaces in ${\bf C}^2$},
   journal={Trans.\ Amer.\ Math.\ Soc.},
   volume={288},
   date={1985},
   number={1},
   pages={413--422},
   issn={0002-9947},
   review={\MR{773068}},
   doi={10.2307/2000447},
}

\bib{Huang98}{article}{
   author={Huang, Xiaojun},
   title={On an $n$-manifold in ${\bf C}^n$ near an elliptic complex
   tangent},
   journal={J.\ Amer.\ Math.\ Soc.},
   volume={11},
   date={1998},
   number={3},
   pages={669--692},
   issn={0894-0347},
   review={\MR{1603854}},
   doi={10.1090/S0894-0347-98-00265-3},
}
\bib{HuangKrantz95}{article}{
   author={Huang, Xiaojun},
   author={Krantz, Steven G.},
   title={On a problem of Moser},
   journal={Duke Math. J.},
   volume={78},
   date={1995},
   number={1},
   pages={213--228},
   issn={0012-7094},
   review={\MR{1328757}},
   doi={10.1215/S0012-7094-95-07809-0},
}
\bib{HuangYin09}{article}{
   author={Huang, Xiaojun},
   author={Yin, Wanke},
   title={A Bishop surface with a vanishing Bishop invariant},
   journal={Invent.\ Math.},
   volume={176},
   date={2009},
   number={3},
   pages={461--520},
   issn={0020-9910},
   review={\MR{2501295}},
   doi={10.1007/s00222-008-0167-1},
}

\bib{HuangYin12}{article}{
      author={Huang, Xiaojun},
      author={Yin, Wanke},
   title={Flattening of CR singular points and analyticity of local hull of holomorphy},
   note={preprint \href{http://arxiv.org/pdf/1210.5146.pdf}{arxiv:1210.5146}},
}

\bib{KenigWebster84}{article}{
   author={Kenig, Carlos E.},
   author={Webster, Sidney M.},
   title={On the hull of holomorphy of an $n$-manifold in ${\bf C}^n$},
   journal={Ann.\ Scuola Norm.\ Sup.\ Pisa Cl.\ Sci.\ (4)},
   volume={11},
   date={1984},
   number={2},
   pages={261--280},
   issn={0391-173X},
   review={\MR{764946}},
}

\bib{Lebl:lfnm}{article}{
   author={Lebl, Ji{\v{r}}{\'{\i}}},
   title={Nowhere minimal CR submanifolds and Levi-flat hypersurfaces},
   journal={J.\ Geom.\ Anal.},
   volume={17},
   date={2007},
   number={2},
   pages={321--341},
   issn={1050-6926},
   review={\MR{2320166}},
   doi={10.1007/BF02930726},
}

\bib{Lebl:lfsing}{article}{
      author={Lebl, Ji{\v r}\'i},
   title={Singular set of a Levi-flat hypersurface is Levi-flat},
   journal={Math.\ Ann.},
   volume={355},
   year={2013},
   number={3},
   doi={10.1007/s00208-012-0821-1},
   pages={1177--1199},
   review={\MR{3020158}},
   note={\href{http://arxiv.org/abs/1012.5993}{arxiv:1012.5993}},
}

\bib{Moser85}{article}{
   author={Moser, J{\"u}rgen},
   title={Analytic surfaces in ${\bf C}^2$ and their local hull of
   holomorphy},
   journal={Ann.\ Acad.\ Sci.\ Fenn.\ Ser.\ A I Math.},
   volume={10},
   date={1985},
   pages={397--410},
   issn={0066-1953},
   review={\MR{802502}},
}

\bib{MoserWebster83}{article}{
   author={Moser, J{\"u}rgen K.},
   author={Webster, Sidney M.},
   title={Normal forms for real surfaces in ${\bf C}^{2}$ near complex
   tangents and hyperbolic surface transformations},
   journal={Acta Math.},
   volume={150},
   date={1983},
   number={3--4},
   pages={255--296},
   issn={0001-5962},
   review={\MR{709143}},
   doi={10.1007/BF02392973},
}

\bib{Shabat:book}{book}{
   author={Shabat, B.~V.},
   title={Introduction to complex analysis. Part II},
   series={Translations of Mathematical Monographs},
   volume={110},
   note={Functions of several variables;
   Translated from the third (1985) Russian edition by J.\ S.\ Joel},
   publisher={American Mathematical Society},
   place={Providence, RI},
   date={1992},
   pages={x+371},
   isbn={0-8218-4611-6},
   review={\MR{1192135}},
}

\bib{Whitney:book}{book}{
   author={Whitney, Hassler},
   title={Complex analytic varieties},
   publisher={Addison-Wesley Publishing Co.},
   place={Reading, MA},
   date={1972},
   pages={xii+399},
   review={\MR{0387634}},
}

\end{biblist}
\end{bibdiv}

\end{document}